\newcommand*{\be}{\begin{equation}}
\newcommand*{\ee}{\end{equation}}
\newcommand*{\ba}{\begin{aligned}}
\newcommand*{\ea}{\end{aligned}}
\newcommand*{\barr}{\begin{array}{c}}
\newcommand*{\earr}{\end{array}}
\begin{document}

\title[Singularity of self-similar measures]
{Singularity versus exact overlaps for self-similar measures}

\author{K\'{a}roly  Simon}
\address{K\'{a}roly Simon, Institute of Mathematics, Technical
University of Budapest, H-1529 B.O.box 91, Hungary
\newline
\tt{simonk{@}math.bme.hu}}

\author{Lajos V\'{a}g\'{o}}
\address{Lajos V\'{a}g\'{o}, Institute of Mathematics, Technical
University of Budapest, H-1529 B.O.box 91, Hungary. MTA-BME Stochastic Research Group (temporary affiliation)
\newline
\tt{vagolala{@}math.bme.hu}}

 \thanks{2000 {\em Mathematics Subject Classification.} Primary
28A80 Secondary 28A99
\\ \indent
{\em Key words and phrases.} Absolute continuity, self-similar IFS.\\
\indent The research of Simon and Vago was supported by OTKA Foundation
{\#}K71693 .  This work was partially supported by the grant 346300 for IMPAN from the Simons
Foundation.}

\maketitle

\begin{abstract}
In this note we present some  one-parameter families of homogeneous self-similar measures on the line such that
 \begin{itemize}
   \item the  similarity dimension is greater than $1$ for all parameters and
   \item the singularity of some of the self-similar measures from this family  is not caused by exact overlaps between the cylinders.
 \end{itemize}
 We can obtain such a family as the angle-$\alpha$ projections of the natural measure of the Sierpi\'nski carpet. We present more general one-parameter families of self-similar measures $\nu_\alpha$, such that the set of parameters $\alpha$ for which $\nu_\alpha$ is singular is a dense $G_\delta$ set but this "exceptional" set of parameters of singularity  has  zero Hausdorff dimension.
\end{abstract}

\section{Introduction}\label{S72}
\subsection{Description of the problem investigated}
In recent years there has been a rapid development in the field of self-similar Iterated Function Systems  (IFS) with overlapping construction. Most importantly, Hochman \cite{Hochman} proved for any self-similar measure $\nu$  that we can have dimension drop (that is $\dim_{\rm H} \nu<\min\left\{1,\dim_{\rm S}\nu \right\}$, )
only if there is a superexponential concentration of cylinders (see Section \ref{S118} for the definitions of the various dimensions used in the paper).  Consequently,
for a one-parameter family of self-similar measures $\left\{\nu_\alpha\right\}_\alpha$ on $\mathbb{R}$, satisfying a certain non-degeneracy condition (Definition \ref{S80})
the Hausdorff dimension of the measure $\nu_\alpha$ is equal to the minimum of its similarity dimension and $1$  for all parameters $\alpha$ except for a small exceptional set of parameters $E$. This exceptional set $E$ is so small that its packing dimension (and consequently its Hausdorff dimension) is zero.
 The corresponding problem for the singularity versus  absolute continuity of self-similar  measures
was treated by Shmerkin and Solomyak \cite{shmerkin2014absolute}. They considered one-parameter families of self-similar measures constructed by one-parameter families of homogeneous self-similar IFS, also satisfying the non-degeneracy condition of Hochman Theorem. It was proved in
\cite[Theorem A]{shmerkin2014absolute}
that for such families $\left\{\nu_\alpha\right\}$ of self-similar measures  if the similarity dimension of the measures in the family is greater than $1$
then for all but a set of Hausdorff dimension zero of parameters $\alpha$, the measure $\nu_\alpha$ is absolute continuous with respect to the Lebesgue measure. The results presented in this note imply that in this case it can happen that the set of exceptional parameters have packing dimension $1$ as opposed  to Hochman's Theorem where we remind that  the packing dimension of the set of exceptional parameters is equal to $0$.

Still,  we do not know what causes the drop of dimension or the singularity of a self-similar measure on the line of similarity dimension greater than $1$. In particular it is a natural question whether the only reason for the drop of the dimension or singularity of self-similar measures having similarity dimension  larger than $1$ is the "exact overlap". More precisely, let $\left\{\varphi_i\right\}_{i=1}^{m}$ be a self-similar IFS and $\nu$ be a corresponding self-similar measure. We say that there is an exact overlap if we can find two distinct $\mathbf{i}=(i_1, \dots ,i_k)$ and $\mathbf{j}=(j_1, \dots ,j_\ell )$ finite words such that
      \begin{equation}\label{S54}
        \varphi_{i_1}\circ \cdots \circ \varphi_{i_k}
        =
        \varphi_{j_1}\circ \cdots \circ \varphi_{j_\ell }.
      \end{equation}

The following two questions have naturally arisen for a  long time (e.g. Question 1  below appeared as \cite[Question 2.6]{peres2000problems}):

\begin{description}
  \item[Question 1] Is it true that a self-similar measure has Hausdorff dimension strictly smaller than the minimum of $1$ and its similarity dimension only if we have exact overlap?
  \item[Question 2] Is it true  for a self-similar measure $\nu$ having similarity dimension greater than one, that $\nu$ is singular only if there is exact overlap?
\end{description}
Most of the experts believe that the answer to Question 1 is positive and it has been confirmed in some special cases \cite{Hochman}. On the other hand, a result of
Nazarov, Peres and Shmerkin
  indicated that the answer to Question 2 should be negative.
   Namely, they
   constructed  in \cite{nazarov2012convolutions} a planar self-affine set having dimension greater than one, such that  the angle-$\alpha$ projection  of its natural measure
   was singular for a dense $G_\delta$ set of parameters  $\alpha$. However, this was not a family of self-similar measures. Up to our best knowledge
before this note, Question 2 has not yet been answered.
    \bigskip

\subsection{New results}
We consider one-parameter families of homogeneous self-similar measures on the line, having similarity dimension greater than $1$. We call the set of those parameters for which the measure is singular, set of parameters of singularity.
\begin{description}
  \item[(a)] We point out that the answer to Question 2 above is negative.  (Theorem \ref{S65}).
  \item[(b)] We consider one-parameter families of self-similar measures for which the set of parameters of singularity is big in the sense that it is a dense $G_\delta$ set but in the same time  the parameter set of singularity is small in the sense that it is a set of Hausdorff dimension zero.
We call such families antagonistic. We point out that there are many antagonistic families. Actually, we show that such antagonistic families are dense in a natural collection of one parameter families. (Proposition \ref{S102}.)
  \item[(c)] As a corollary, we obtain that it happens quite frequently that in Shmerkin-Solomyak Theorem (Theorem \ref{S85}) the exceptional set has packing dimension $1$. (Corollary \ref{S103}.)
  \item[(d)] We extend the scope of  \cite[Proposition 8.1]{peres2000sixty} from infinite Bernoulli convolution measures to
      very general one-parameter families of (not necessarily self-similar, or self-affine) IFS, and state that the parameter set of singularity is a $G_\delta$ set (Theorems \ref{S128}, \ref{S11}).
\end{description}

\subsection{Comments}
\bigskip
 The main goal of this note is to make the observation  that the  combination of an already existing method of Peres, Schlag and Solomyak \cite{peres2000sixty} and a result  due to Manning and  the first author of this note \cite{manning2013dimension} yields that the answer to Question 2 is negative.

  There are two ingredients of our argument:
   \begin{description}
     \item[(i)] The fact that the set of parameters of singularity is a $G_\delta$ set in any reasonable one-parameter family of self-similar measures on the line.
     \item[(ii)] The existence of a one-parameter family of self-similar measures having similarity dimension greater than one (for all parameters) with a dense set of parameters of singularity.
   \end{description}
    It turned out that both of these ingredients  have been available for a while in the literature. Although in an earlier version of this note the authors had their longer proof for \textbf{(i)}, we learned from
    B. Solomyak  that \textbf{(i)} has already been proved in
    \cite[Proposition 8.1]{peres2000sixty}
    in the
special  case of infinite Bernoulli convolutions.
  Actually, the authors of \cite{peres2000sixty} acknowledged that the  short and elegant proof of
  \cite[Proposition 8.1]{peres2000sixty} is due to
   Elon Lindenstrauss.
 We extend the scope of
   \cite[Proposition 8.1]{peres2000sixty} to a more general case. Then following the supposition of the anonymous referee we finally got a very general case.
	So, to prove \textbf{(i)}, we will present here a more detailed and very general extension of the proof of \cite[Proposition 8.1]{peres2000sixty}.

   On the other hand \textbf{(ii)} was proved in \cite{manning2013dimension}.

\subsection{Notation}\label{S99}
First we introduce the Hausdorff and similarity dimensions of a measure and then we present some definitions related to the singularity and absolute continuity of the family of measures considered in the paper.
\subsubsection{The different notions of dimensions used in the paper }\label{S118}
\begin{itemize}
  \item  The notion of the  \emph{Hausdorff and box dimension of a set }  is  well known (see e.g. \cite{FalconerTechniques}).
       \item  \emph{Hausdorff dimension of a measure}: Let $\mathfrak{m}$ be  a measure on $\mathbb{R}^d$. The Hausdorff dimension of $\mathfrak{m}$ is defined by
\begin{equation}\label{S117}
  \dim_{\rm H} \mathfrak{m}:=\inf\left\{
  \dim_{\rm H} A:\mathfrak{m}(A)>0, \mbox{ and $A$ is a Borel set}
  \right\},
\end{equation}
 see \cite[p. 170]{FalconerTechniques} for an equivalent definition.
  \item
We will use the following definition of the Packing dimension of a set $H\subset\mathbb{R}^d$ \cite[p. 23.]{FalconerTechniques}:
\begin{equation}\label{S200}
  \dim_{\rm P} H = \inf\{\sup_i \overline{\dim_{\rm B}} E_i\ :\ H\subset \bigcup_{i=1}^\infty E_i\},
\end{equation}
where $\overline{\dim_{\rm B}}$ stands for the upper box dimension.
The most important properties of the packing dimension can be found in
\cite{FalconerTechniques}.

  \item \emph{Similarity dimension of a self-similar measure}: Consider the self-similar IFS on the line: $\mathcal{F}:=\left\{\varphi(x):=r_i \cdot x+t_i\right\}_{i=1}^{m}$, where $r_i\in(-1,1)\setminus \left\{0\right\}$. Further we are given the probability vector  $\mathbf{w}:=(w_1, \dots ,w_m)$. Then there exists a unique measure $\nu$ satisfying $\nu(H)=\sum\limits_{i=1}^{m}w_i \cdot \nu\left(\varphi_{i}^{-1}
  \left(H\right)\right)$. (See \cite{FalconerTechniques}.) We call $\nu=\nu_{\mathcal{F},\mathbf{w}}$ the self-similar measure corresponding to $\mathcal{F}$ and $\mathbf{w}$. The similarity dimension of $\nu$ is defined by
  \begin{equation}\label{S71}
 \dim_{\rm S}(\nu_{\mathcal{F},\mathbf{w}}):=  \frac{\sum\limits_{i=1}^{m}w_i\log w_i}{\sum\limits_{i=1}^{m}w_i\log r_{ i}}.
  \end{equation}

\end{itemize}

 \subsubsection{The projected families of a self-similar measure}\label{S119}
Let
 \begin{equation}\label{S2}
   \mathcal{F}_\alpha:=\left\{
   \varphi_{i}^{\alpha}(x):=r_{\alpha,i} \cdot x+t_{i}^{(\alpha)}
   \right\}_{i=1}^{m},\quad \alpha\in A,
 \end{equation}
be a one-parameter family of self-similar  IFS on $\mathbb{R}$ and let $\mu$ be a measure on the symbolic space
$\Sigma:=\left\{1, \dots ,m\right\}^\mathbb{N}.
$
We write
$$
\varphi_{i_1 \dots i_n}^{\alpha}:=\varphi_{i_1}^{\alpha}\circ \cdots\circ\varphi_{i_n}^{\alpha} \mbox{ and }
r_{\alpha,i_1 \dots i_n}:=r_{\alpha,i_1} \cdots r_{\alpha,i_n}.
$$
The natural projection $\Pi_\alpha:\Sigma\to\mathbb{R}$
is defined by
\begin{equation}\label{S31}
  \Pi_\alpha(\mathbf{i}):=\lim\limits_{n\to\infty} \varphi_{i_1 \dots i_n}^{\alpha}(0)=
  \sum\limits_{k=1}^{\infty }t_{i_k}^{(\alpha)}r_{\alpha,i_1 \dots ,i_{k-1}},
\end{equation}
where $r_{\alpha,i_1 \dots ,i_{k-1}}:=1$ when $k=1$.
Let $\mu$ be a   probability measure on $\Sigma$. We study the family of its push forward measures $\left\{\nu_\alpha\right\}_{\alpha\in A}$:
\begin{equation}\label{S57}
  \nu_\alpha(H):=
  (\Pi_\alpha)_*\mu(H):=\mu(\Pi_\alpha^{-1}(H)),
\end{equation}
where $H$ is a Borel subset of $\Sigma$.

 The elements of the symbolic space $\Sigma:=\left\{1, \dots ,m\right\}^\mathbb{N}$ are denoted by $\mathbf{i}=(i_1,i_2, \dots )$.

 If $\mathbf{w}:=(w_1, \dots ,w_m)$ is a probability vector and $\mu$ is the infinite product of $\mathbf{w}$, that is
$\mu=\left\{w_1, \dots ,w_m\right\}^\mathbb{N}$ then the corresponding one-parameter family of self-similar measures defined in \eqref{S57} is denoted by $\left\{\nu_{\alpha,\mathbf{w}}\right\}_{\alpha\in A}$.

The set of parameters of singularity and the set of parameters of absolute continuity with $L^q$-density
 are denoted by
\begin{equation}\label{S55}
\mathfrak{Sing}(\mathcal{F}_\alpha,\mu) :=\left\{\alpha\in A:\nu_\alpha\bot\mathcal{L}\mathrm{eb}\right\} .
\end{equation}
and
\begin{equation}\label{S56}
\mathfrak{Cont}_Q(\mathcal{F}_\alpha,\mu) :=
  \left\{\alpha:\nu_\alpha\ll\mathcal{L}\mathrm{eb}
  \mbox{ with $L^q$ density for a $q>1$}
  \right\}.
\end{equation}
\begin{definition}\label{S58}
  Using the notation introduced in \eqref{S2}-\eqref{S56} we say that the family $\left\{\nu_\alpha\right\}_{\alpha\in A}$ is \textbf{antagonistic} if both of the two conditions below hold:
  \begin{equation}\label{S59}
\dim_{\rm H}\mathfrak{Sing}(\mathcal{F}_\alpha,\mu)=    \dim_{\rm H} \left(\mathfrak{Cont}_Q(\mathcal{F}_\alpha,\mu)\right)^c=0
  \end{equation}
  and
  \begin{equation}\label{S60}
 \mathfrak{Sing}(\mathcal{F}_\alpha,\mu) \mbox{ is a dense $G_\delta$ subset of $A$}.
  \end{equation}
\end{definition}
Clearly, $\mathfrak{Sing}\subset \left(\mathfrak{Cont}_Q\right)^c$. Our aim is to prove that the angle-$\alpha$ projections of the natural measure of the Sierpi\'nski-carpet is an antagonistic family. This implies that in Shmerkin-Solomyak's Theorem, \cite[Theorem A]{shmerkin2014absolute} (this is Theorem \ref{S85} below) the exceptional set has packing dimension $1$.

\subsection{Regularity properties of $\mathcal{F}_\alpha$}

Whenever we say that $\left\{\nu_\alpha\right\}_{\alpha\in A}$ is a one-parameter family of self-similar IFS we always mean that $\left\{\nu_\alpha\right\}_{\alpha\in A}$ is constructed from a pair $(\mathcal{F}_\alpha,\mu)$ as in \eqref{S57}, for a $\mu=\mathbf{w}^{\mathbb{N}}$, where $\mathbf{w}=(w_1, \dots ,w_m)$ is a probability vector.

\begin{PA}\label{S114}
Throughout this note, we always assume that the one-parameter family of self-similar IFS $\left\{\mathcal{F}_\alpha\right\}_{\alpha\in A}$ satisfies properties \textbf{P1}-\textbf{P4} below:
\begin{description}
  \item[P1] The parameter domain is a non-empty, proper open interval $A$.
  \item[P2] $0<r_{\min}:=\inf\limits_{\alpha\in A,i \leq m}|r_{\alpha,i}| \leq \sup\limits_{\alpha\in A,i \leq m}|r_{\alpha,i}|=:r_{\max}<1$.
  \item[P3] $t_{\max}^*:=\sup\limits_{\alpha\in A,i \leq m}|t_{i}^{(\alpha)}|<\infty $.
  \item[P4] Both of the functions $\alpha\mapsto t_{i}^{(\alpha)}$ and $\alpha\mapsto r_\alpha$, $\alpha\in A$, can be extended to $\overline{A}$ (the closure of $A$) such that these extensions are both continuous.
\end{description}
\end{PA}
Note that \textbf{P4} implies \textbf{P3}.
It follows from properties  \textbf{P2} and \textbf{P3}  that there exists a big $\xi\in\mathbb{R}^+$ such that
\begin{equation}\label{S30}
  \mathrm{spt}(\nu_\alpha)\subset (-\xi,\xi),\quad \forall \alpha\in A.
\end{equation}
We always confine ourselves to this interval $(-\xi,\xi)$. In particular,  whenever we write $H^c$ for a set $H\subset \mathbb{R}$ we mean $(-\xi,\xi)\setminus H$.
It will be our goal to prove that additionally the following properties also hold for some of the families under consideration:

\begin{description}
 \item[P5A] $\mathfrak{Sing}(\mathcal{F}_\alpha,\mu)$ is dense in $A$.
     \item[P5B] $\mathfrak{Sing}(\mathcal{F}_\alpha,\mu)$ is a $G_\delta$ dense subset of $A$.
\end{description}

We will prove below that Properties \textbf{P5A} and \textbf{P5B} are equivalent.
Our motivating example, where all of these properties hold is as follows.

\subsection{Motivating example}
Our most important example is the family of angle-$\alpha$ projection of the natural measure of the usual Sierpi\'nski carpet. We will see that the set of angles of singularity is a dense $G_\delta$ set which has Hausdorff dimension zero and packing dimension $1$. First we define the Sierpi\'nski carpet.
\begin{definition}
  Let $\mathbf{t}_1, \dots ,\mathbf{t}_8\in\mathbb{R}^2$ be the $8$ elements of the set

  $\left\{\left\{0,1,2\right\}\times\left\{0,1,2\right\}\setminus\left\{(1,1)\right\}\right\}$ in any particular order. The Sierpi\'nski carpet is the attractor of the IFS
  \begin{equation}\label{S126}
   \mathcal{S}:=\left\{ \varphi_i(x,y):= \frac{1}{3}(x,y)+\frac{1}{3}\mathbf{t}_i\right\}_{i=1}^{8}.
  \end{equation}
\begin{figure}[H]
  \centering
  \includegraphics[width=4cm]{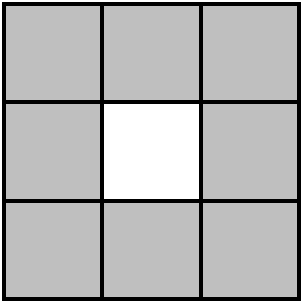}
   \includegraphics[width=4cm]{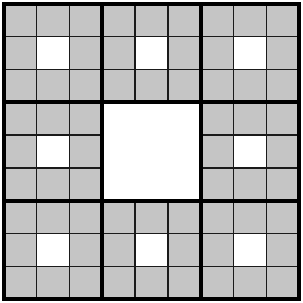}
    \includegraphics[width=4cm]{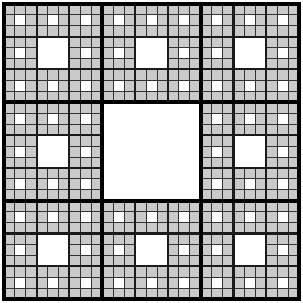}
  \caption{The first three approximations of the Sierpi\'nski carpet}\label{S127}
\end{figure}

\end{definition}

\begin{example}[Motivating example]\label{S1}
  Let $\mathcal{S}$ be the IFS given in \eqref{S126}. Let $\mu:=\left(\frac{1}{8}, \dots ,\frac{1}{8}\right)^{\mathbb{N}}$ be the uniform distribution measure on the symbolic space $\Sigma:=\left\{1, \dots ,8\right\}^{\mathbb{N}}$. Further we write $\Pi$ for the natural projection from $\Sigma$ to the attractor $\Lambda$. Let $\nu:=\Pi_*\mu$.
   Let $\ell _\alpha\subset \mathbb{R}^2$ be the line having angle $\alpha$ with the positive half of the $x$-axis (see Figure \ref{S127}).
  Let $\mathrm{proj}_\alpha$ be the
   angle-$\alpha$ projection from $\mathbb{R}^2$  to the  line $\ell _\alpha$. For each $\alpha$, identifying $\ell_\alpha$ with the $x$-axis, $\mathrm{proj}_\alpha$
 defines a one parameter family of self-similar IFS
on the $x$-axis:
$$\mathcal{S}_\alpha:=\left\{\varphi_{i}^{(\alpha)}\right\}_{i=1}^{8},$$
 where $\alpha\in A:= (0, \pi)$ and $\varphi_{i}^{(\alpha)}(x) = r_{\alpha,i}x +  t_{i}^{(\alpha)}$ with $r_{\alpha,i}\equiv 1/3$ and $t_{i}^{(\alpha)} = \mathbf{t}_i\cdot (\cos(\alpha), \sin(\alpha))$. For an $\mathbf{i}\in\Sigma$  we define the natural projection
 $\Pi_\alpha(\mathbf{i})$ as in \eqref{S31}.
 Clearly,   $\Pi_\alpha:=\mathrm{proj}_\alpha\circ\Pi$.
   The natural invariant measure for  $\mathcal{S}_\alpha$ is $\nu_\alpha:=(\Pi_\alpha)_*\mu$. Obviously,  $\nu_\alpha=(\mathrm{proj}_\alpha)_*\nu$.
   \begin{figure}[H]
  \centering
  \includegraphics[width=12cm]{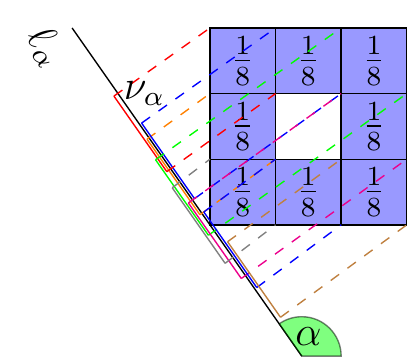}
  \caption{The projected system}\label{S127}
\end{figure}
\end{example}
The fact that Property \textbf{P5A} holds for the special case in the example was proved in \cite[p.216]{manning2013dimension}. It follows from the proof  of B\'ar\'any and Rams
\cite[Theorem 1.2 ]{barany2014dimension} that property \textbf{P5A} holds also for
the projected family of the natural measure for most of those self-similar carpets, which have dimension greater than one.

\begin{remark}[The cardinality of parameters of exact overlaps]\label{S116}
It is obvious that in the case of the angle-$\alpha$ projections of a general self-similar carpet, exact overlap can happen only for countably many parameters. However, this is not true in general. To see this, we follow  the ideas in the paper of Cs. S\'andor \cite{sandor2004family} and construct
the one parameter family of self-similar IFS $\left\{S_{i}^{(u)}\right\}_{i=1}^{3}$, $u\in U$, where $S_{i}^{(u)}:=\lambda_{i}^{(u)}(x+1)$ and
$(\lambda_{1}^{(u)},\lambda_{2}^{(u)},\lambda_{3}^{(u)})=\left(\frac{u}{1+\varepsilon},
u,u+\varepsilon
\right)$, further $U:=\left[\frac{1}{3}+\frac{\varepsilon}{3},
\frac{1}{3}+\eta-\varepsilon\right]$ for  sufficiently small $\eta>0$
and $0<\varepsilon<\frac{3}{4}\eta$. Then for all $u\in U$ we have:
\begin{description}
  \item[(a)] there is an exact overlap, namely: $S_{132}^{(u)}\equiv S_{213}^{(u)}$,
  \item[(b)] the similarity dimension of the attractor is greater than $1$,
  \item[(c)] the Hausdorff dimension of the attractor is smaller than $1$.
\end{description}
\end{remark}


\section{Theorems we use from the literature}
For the ease of the reader
here we collect those theorems we refer to in this note. We always use the notation of Section \ref{S72}. The theorems below are more general as stated here. We confine ourselves to the generality that matters  for us.

\subsection{Hochman Theorems}\label{S86}
\begin{theorem}\cite[Theorems 1.7, Theorems 1.8]{Hochman}\label{S78}
 Given the one-parameter family $\left\{\mathcal{F}_\alpha\right\}_{\alpha\in A}$ in the form as in \eqref{S2}.
For $\mathbf{i},\mathbf{j}\in\Sigma^n:=\left\{1, \dots ,m\right\}^n$ we define
\begin{equation}\label{S92}
  \Delta_{\mathbf{i},\mathbf{j}}(\alpha):=
\varphi_{\mathbf{i}}^{\alpha}(0)-\varphi_{\mathbf{i}}^{\alpha}(0)
\mbox{ and }\Delta_n(\alpha):=\min\limits_{\mathbf{i},\mathbf{j}\in\Sigma^n}
\left\{\Delta_{\mathbf{i},\mathbf{j}}(\alpha)\right\}.
\end{equation}

Moreover, we define the exceptional set of parameters $E\subset A$
\begin{equation}\label{S76}
  E:=
\bigcap\limits_{\varepsilon>0}
\bigcup\limits_{N=1}^{\infty }
\bigcap\limits_{n>N}
\Delta_{n}^{-1}\left(-\varepsilon^n,\varepsilon^n\right).
\end{equation}

Then for an $\alpha\in E^c$  and for every probability vector $\mathbf{w}$
the Hausdorff dimension of the corresponding self-similar measure $\nu_{\alpha,\mathbf{w}}$ is
\begin{equation}\label{S73}
 \dim_{\rm H} (\nu_{\alpha,\mathbf{w}})=\min\left\{1,\dim_{\rm S}(\nu_{\alpha,\mathbf{w}})\right\}
\end{equation}
\end{theorem}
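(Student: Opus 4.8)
The equality is the conjunction of an upper and a lower bound, and only the lower bound carries content. The upper bound $\dim_{\rm H}(\nu_{\alpha,\mathbf w})\le\min\{1,\dim_{\rm S}(\nu_{\alpha,\mathbf w})\}$ holds for \emph{every} self-similar measure on $\mathbb R$ with no assumption on $\alpha$: the bound by $1$ is trivial, and the bound by $\dim_{\rm S}$ comes from using the level-$n$ cylinders $\varphi_{\mathbf u}^{\alpha}(\mathrm{spt}\,\nu_{\alpha,\mathbf w})$, $|\mathbf u|=n$, weighted by $w_{\mathbf u}$, as an efficient cover. So the task is to prove
$$\dim_{\rm H}(\nu_{\alpha,\mathbf w})\ \ge\ \min\{1,\dim_{\rm S}(\nu_{\alpha,\mathbf w})\}\qquad\text{whenever }\alpha\notin E.$$

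The plan is first to replace Hausdorff dimension by entropy. Self-similar measures are exact dimensional (a theorem of Feng and Hu), so $\dim_{\rm H}\nu_{\alpha,\mathbf w}=\lim_{n\to\infty}\tfrac1n H(\nu_{\alpha,\mathbf w},\mathcal D_n)$, where $\mathcal D_n$ is the partition of $\mathbb R$ into dyadic intervals of length $2^{-n}$; with $\chi=-\sum_i w_i\log|r_{\alpha,i}|$ and $H(\mathbf w)=-\sum_i w_i\log w_i$ one has $\dim_{\rm S}(\nu_{\alpha,\mathbf w})=H(\mathbf w)/\chi$, so the target becomes a lower bound on the exponential growth rate of $H(\nu_{\alpha,\mathbf w},\mathcal D_n)$ in $n$. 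Second, I would exploit the self-similarity as a convolution structure: grouping consecutive symbols of an infinite word into blocks of length $\ell$ (cut at a common scale when the ratios are not homogeneous) exhibits $\nu_{\alpha,\mathbf w}$, up to an approximation harmless for entropy, as an infinite convolution $\theta_0*\theta_1*\theta_2*\cdots$ of rescaled copies of one finitely supported measure $\theta$: namely $\theta$ is the push-forward of $\mathbf w^{\otimes\ell}$ by $\mathbf u\mapsto\varphi_{\mathbf u}^{\alpha}(0)$, it has at most $m^\ell$ atoms of total mass $1$ and entropy $\le\ell H(\mathbf w)$, and its atoms are the level-$\ell$ cylinder positions. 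Here the hypothesis $\alpha\notin E$ enters: unravelling \eqref{S76}, $\alpha\notin E$ supplies a constant $c>0$ with $\Delta_\ell(\alpha)\ge c^\ell$ for infinitely many $\ell$, so along those $\ell$ the atoms of $\theta$ are not merely distinct but $c^\ell$-separated, whence $\theta$ already carries its full entropy $\ell H(\mathbf w)$ at dyadic scale $\asymp\ell\log(1/c)$.

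The heart of the matter is then a contradiction argument driven by Hochman's inverse theorem for the entropy of convolutions. Suppose $\dim_{\rm H}\nu_{\alpha,\mathbf w}=s<\min\{1,H(\mathbf w)/\chi\}$, so in particular $s<1$. Reconstructing $\nu_{\alpha,\mathbf w}$ scale by scale by convolving in the blocks $\theta_j$, each block \emph{could} raise the entropy at rate $H(\mathbf w)/\chi$ over the scales below its separation scale, yet the total only grows at rate $s<H(\mathbf w)/\chi$; hence at a set of scales of positive density the incoming convolution fails to increase entropy. At each such scale the inverse theorem forces a dichotomy on a typical dyadic component: either the running partial convolution is $\varepsilon$-close to uniform at that scale, or the incoming block $\theta_j$ is $\varepsilon$-close to atomic at that scale. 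The first option cannot occur at a positive density of scales, as that would force $\lim\tfrac1n H(\nu_{\alpha,\mathbf w},\mathcal D_n)\ge 1>s$; the second cannot occur at a positive density of scales below $\asymp\ell\log(1/c)$ either, since a measure with $\asymp e^{\ell H(\mathbf w)}$ atoms of comparable mass that is genuinely $c^\ell$-separated is far from atomic at all such dyadic scales. This is the contradiction, giving the lower bound. (Theorem~1.7 is the same assertion phrased without parameters --- ``dimension drop forces $\Delta_n(\alpha)\to0$ super-exponentially'' --- and Theorem~1.8 records that in a one-parameter family the resulting exceptional set is exactly the $E$ of \eqref{S76}, whose packing dimension is $0$ by a direct Borel--Cantelli / covering estimate on the nested unions defining $E$.)

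The decisive obstacle is the inverse theorem for entropy of convolutions itself, the technical core of Hochman's work, which I would not attempt to reprove: it rests on a delicate multiscale comparison of $H(\mu*\theta,\mathcal D_n)$ with $H(\mu,\mathcal D_n)$ through the behaviour of the dyadic components of $\mu$ and $\theta$ at every intermediate scale, together with the structure of measures that are ``uniform at many scales'' or ``atomic at many scales.'' Everything else --- exact dimensionality, the convolution decomposition, the passage from $\alpha\notin E$ to quantitative $c^\ell$-separation, and the packing-dimension bound on $E$ --- is comparatively routine once that engine is available.
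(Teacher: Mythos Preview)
The paper does not supply a proof of this theorem at all: it is stated in Section~2, ``Theorems we use from the literature,'' as a citation of \cite[Theorems 1.7, 1.8]{Hochman}, with no argument given. So there is no proof in the paper to compare your attempt against.

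That said, what you have written is a reasonable high-level outline of Hochman's original argument, and you are honest that the inverse theorem for entropy of convolutions is the load-bearing piece you are not reproving. A few remarks on accuracy. First, your convolution decomposition $\theta_0*\theta_1*\cdots$ as stated is really the homogeneous picture; in the general case Hochman uses the relation $\nu=\nu^{(n)}*S_{r}\nu$ (a discrete level-$n$ measure convolved with a rescaled copy of $\nu$ itself), and the multiscale analysis runs through this self-convolution structure rather than an infinite product of independent blocks. Second, invoking Feng--Hu exact dimensionality is unnecessary in Hochman's route: he works directly with entropy dimension $\lim \tfrac{1}{n}H(\nu,\mathcal D_n)$, and the inequality $\dim_{\rm H}\nu\ge$ entropy dimension is the only direction needed for the lower bound. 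Third, your dichotomy paragraph compresses the actual argument quite a bit: the inverse theorem does not give a clean either/or at each scale but a statement about the density of ``good'' scales, and turning that into a contradiction requires Hochman's multiscale entropy formula and some bookkeeping you have suppressed. None of this is a genuine error in strategy, but as written the sketch would not compile into a proof without substantial expansion along the lines of Hochman's Sections~4--5.
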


The following Condition will also be important:
\begin{definition}\label{S74}
  We say that for an $\alpha\in A$, $\mathcal{F}_\alpha$ satisfies
 \textbf{Condition H}  if
 \begin{equation}\label{S75}
   \exists \rho=\rho(\alpha)>0, \ \exists n_k=n_k(\alpha)\uparrow\infty,\quad
   \Delta_{n_k}(\alpha)>\rho^{n_k}.
 \end{equation}
\end{definition}
Observe that $\alpha\in E^c$ if and only if $\mathcal{F}_\alpha$ satisfies Condition H.
\begin{definition}\label{S80}
  We say that the \textbf{Non-Degeneracy Condition} holds if
  \begin{equation}\label{S81}
    \forall \mathbf{i},\mathbf{j}\in \Sigma,\ \mathbf{i}\ne\mathbf{j},\
    \exists \alpha\in A\mbox{ s.t. }
    \Pi_\alpha(\mathbf{i})\ne\Pi_\alpha(\mathbf{j}).
  \end{equation}
\end{definition}

\begin{theorem}\cite[, Theorems 1.7, Theorems 1.8]{Hochman}\label{S77}
  Assume that the Non-Degeneracy Condition holds and the following functions are real analytic:
  \begin{equation}\label{S79}
    \alpha\mapsto r_{\alpha,i},\ i=1, \dots ,m\mbox{ and }
    \alpha\mapsto t_{i}^{(\alpha)}.
  \end{equation}
  Then
  \begin{equation}\label{S82}
    \dim_{\rm H} E=\dim_{\rm P}E=0.
  \end{equation}
\end{theorem}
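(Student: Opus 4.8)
The plan is to follow Hochman's proof, which is a quantitative real-analyticity estimate requiring no transversality. Write $E=\bigcap_{\varepsilon>0}E_\varepsilon$ with $E_\varepsilon:=\bigcup_{N\ge1}\bigcap_{n>N}\Delta_n^{-1}(-\varepsilon^n,\varepsilon^n)$. Since $\dim_{\rm H}\le\dim_{\rm P}$, and since $\dim_{\rm P}$ is countably stable and never exceeds $\overline{\dim_{\rm B}}$, it suffices to fix a compact subinterval $A'$ of $A$ and show, for every $\varepsilon\in(0,r_{\min})$ and every $N$, that the tail set $F_{N,\varepsilon}:=\{\alpha\in A':\Delta_n(\alpha)<\varepsilon^n\text{ for all }n>N\}$ has packing dimension at most some $\beta(\varepsilon)$ with $\beta(\varepsilon)\to0$ as $\varepsilon\to0$; then $\dim_{\rm P}(E\cap A')\le\sup_N\dim_{\rm P}F_{N,\varepsilon}\le\beta(\varepsilon)$ for all $\varepsilon$, hence $\dim_{\rm P}(E\cap A')=0$, and a countable exhaustion $A=\bigcup_kA'_k$ finishes the proof.

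The analytic input is as follows. For finite words $\mathbf{i},\mathbf{j}$, the map $\alpha\mapsto\Delta_{\mathbf{i},\mathbf{j}}(\alpha)=\varphi^\alpha_{\mathbf{i}}(0)-\varphi^\alpha_{\mathbf{j}}(0)$ is a finite sum of products of the real-analytic functions $\alpha\mapsto r_{\alpha,i}$ and $\alpha\mapsto t^{(\alpha)}_i$, hence real-analytic, and using $|r_{\alpha,i}|\le r_{\max}<1$ from \textbf{P2} it extends to a holomorphic function on a fixed complex neighborhood $\Omega$ of $\overline{A'}$ with a bound $\sup_\Omega|\Delta_{\mathbf{i},\mathbf{j}}|\le C_1$ \emph{uniform} over all words (the partial sums defining $\varphi^\alpha_{\mathbf{i}}(0)$ are dominated by a convergent geometric series on $\Omega$). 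The crux is a matching \emph{uniform lower bound}. First reduce to pairs differing in the first symbol: if $\ell$ is the first disagreement between $\mathbf{i}$ and $\mathbf{j}$, then $\Delta_{\mathbf{i},\mathbf{j}}(\alpha)=r_{\alpha,i_1\dots i_{\ell-1}}\,\Delta_{\mathbf{i}'',\mathbf{j}''}(\alpha)$ with $r_{\min}^n\le|r_{\alpha,i_1\dots i_{\ell-1}}|\le1$ and $\mathbf{i}'',\mathbf{j}''$ differing in their first symbol. Then, for such a pair of length at least some fixed $n_0$, combine the approximation $|\varphi^\alpha_{\mathbf{p}}(0)-\Pi_\alpha(\mathbf{p}\mathbf{w})|\le\xi\, r_{\max}^{|\mathbf{p}|}$ with the following consequence of the Non-Degeneracy Condition and analyticity: each function $\alpha\mapsto\Pi_\alpha(\mathbf{a})-\Pi_\alpha(\mathbf{b})$ is holomorphic on $\Omega$ (uniformly convergent series) and, by non-degeneracy, not identically zero; $(\mathbf{a},\mathbf{b})\mapsto\max_{\alpha\in\overline{A'}}|\Pi_\alpha(\mathbf{a})-\Pi_\alpha(\mathbf{b})|$ is continuous; and the set of pairs of infinite words with distinct first symbol is a finite union of cylinders, hence compact. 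Therefore this maximum is bounded below by some $\delta>0$, and transferring to finite words gives $\max_{\alpha\in\overline{A'}}|\Delta_{\mathbf{i}'',\mathbf{j}''}(\alpha)|\ge\delta/2$ for all such finite pairs of length $\ge n_0$.

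It then remains to invoke the standard quantitative zero-counting lemma (a Jensen-formula, or effective {\L}ojasiewicz, argument): a holomorphic $g$ on $\Omega$ with $\|g\|_\Omega\le C_1$ and $\max_{\overline{A'}}|g|\ge\delta/2$ has at most $M=M(C_1,\delta,\Omega,A')$ zeros in a neighborhood of $\overline{A'}$ and vanishes there to order at most $M$, so $\{\alpha\in A':|g(\alpha)|<\rho\}$ is covered by at most $M$ intervals of length $\le C\rho^{1/M}$, with $C,M$ independent of $g$ in the family. Applying this to $g=\Delta_{\mathbf{i}'',\mathbf{j}''}$ (recalling that the prefix factor forces $|\Delta_{\mathbf{i}'',\mathbf{j}''}(\alpha)|<(\varepsilon/r_{\min})^n$ whenever $|\Delta_{\mathbf{i},\mathbf{j}}(\alpha)|<\varepsilon^n$), one covers $F_{N,\varepsilon}$, for each $n>\max\{N,n_0\}$, by at most $M\,m^{2n}$ intervals of length $\le C(\varepsilon/r_{\min})^{n/M}$, apart from a finite exceptional set coming from the finitely many ``short'' suffix pairs (a pair $\mathbf{p}\ne\mathbf{q}$ with $\Delta_{\mathbf{p},\mathbf{q}}\equiv0$ cannot occur, as it would force $\varphi^\alpha_{\mathbf{p}}\equiv\varphi^\alpha_{\mathbf{q}}$ and contradict non-degeneracy). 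Letting $n\to\infty$ bounds the upper box dimension of each piece of a countable decomposition of $F_{N,\varepsilon}$, hence $\dim_{\rm P}F_{N,\varepsilon}$, by $\beta(\varepsilon):=2M\log m/\log(r_{\min}/\varepsilon)$, which tends to $0$ as $\varepsilon\to0$, exactly as required in the first paragraph.

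The main obstacle is the uniform lower bound $\max_{\overline{A'}}|\Delta_{\mathbf{i}'',\mathbf{j}''}|\ge\delta/2$ from the second paragraph: this is the only place where the Non-Degeneracy Condition and real-analyticity are genuinely used, and it rests on combining compactness of the space of symbolic pairs with distinct first symbol, continuous dependence of the holomorphic extension of $\alpha\mapsto\Pi_\alpha(\mathbf{a})-\Pi_\alpha(\mathbf{b})$ on $(\mathbf{a},\mathbf{b})$, and the passage from infinite to finite words. The quantitative zero-counting lemma is the analytic engine; the rest is bookkeeping with the geometric factors $r_{\min},r_{\max}$ and the countable stability of the dimensions.
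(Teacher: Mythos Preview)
The paper does not give its own proof of this statement: Theorem~\ref{S77} is quoted verbatim as a result of Hochman, and the authors neither sketch nor reproduce the argument. So there is nothing in the paper to compare your proposal against. What you have written is a faithful outline of Hochman's original proof (the compactness step producing a uniform lower bound on $\max_{\alpha\in\overline{A'}}|\Pi_\alpha(\mathbf{a})-\Pi_\alpha(\mathbf{b})|$ over pairs with distinct first symbol, followed by the quantitative Jensen/\L ojasiewicz zero-counting lemma, and the final box-dimension bookkeeping), and the overall structure is correct.

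One small point deserves care. Your treatment of the finitely many ``short'' suffix pairs asserts that $\Delta_{\mathbf{p},\mathbf{q}}\equiv 0$ would force $\varphi^\alpha_{\mathbf{p}}\equiv\varphi^\alpha_{\mathbf{q}}$. This implication is immediate only when the contraction ratios coincide, i.e.\ $r_{\alpha,\mathbf{p}}\equiv r_{\alpha,\mathbf{q}}$, since $\Delta_{\mathbf{p},\mathbf{q}}$ as defined in \eqref{S92} records only $\varphi^\alpha_{\mathbf{p}}(0)-\varphi^\alpha_{\mathbf{q}}(0)$ and not the ratios. In the genuinely non-homogeneous setting one must either (i) observe that if $\Delta_{\mathbf{p},\mathbf{q}}\equiv 0$ for some finite pair then prepending any common prefix gives $\Delta_n\equiv 0$ for all large $n$, whence $E=A$ and the conclusion $\dim_{\rm P}E=0$ would already fail---so this case must be excluded separately from the hypotheses; or (ii) use Hochman's actual separation quantity, which also compares the ratios $r_{\alpha,\mathbf{p}},r_{\alpha,\mathbf{q}}$ and for which your implication is automatic. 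Since every application in this paper is to homogeneous families (all $r_{\alpha,i}$ equal), the issue does not arise here, but it is worth flagging that the justification as written leans on homogeneity.
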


\subsection{Shmerkin-Solomyak Theorem}
\begin{theorem}\cite[Theorem A]{shmerkin2014absolute}\label{S85}
  We assume that the conditions of Theorem \ref{S77} hold.
Here we confine ourselves to homogeneous self-similar IFS on the line of the form
\begin{equation}\label{S83}
   \mathcal{F}_\alpha:=\left\{
   \varphi_{i}^{\alpha}(x):=r_{\alpha} \cdot x+t_{i}^{(\alpha)}
   \right\}_{i=1}^{m},\quad \alpha\in A.
 \end{equation}
Then there exists an exceptional set  $E\subset A$ with $\dim_{\rm H} E=0$
 such that for any $\alpha\in E^c$ and for any probability vector $\mathbf{w}=(w_1, \dots ,w_m)$ with $\dim_{\rm S} (\nu_{\alpha,\mathbf{w}})>1$ we have
 $$
 \nu_{\alpha,\mathbf{w}}\ll\mathcal{L}\mathrm{eb}\mbox{ with }
 L^q\mbox{ density, for some }q>1.
 $$
\end{theorem}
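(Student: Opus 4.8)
The plan is to single out one exponent $q>1$ for which $\nu_{\alpha,\mathbf{w}}$ is forced to be absolutely continuous with density in $L^q$, and to reduce this to an estimate on the $L^q$-dimension of $\nu_{\alpha,\mathbf{w}}$ which, outside a zero-dimensional parameter set, is delivered by Hochman's inverse-entropy theorem together with the $L^q$-smoothing theory for convolutions. First I would use the elementary criterion that a compactly supported measure $\nu$ on $\mathbb{R}$ lies in $L^q$, $q>1$, precisely when $\sup_{n}2^{\,n(q-1)}\sum_{I}\nu(I)^q<\infty$, the sum running over the dyadic intervals $I$ of length $2^{-n}$: one implication is Hölder's inequality, and for the other the normalised densities $2^{n}\sum_{I}\nu(I)\mathbf{1}_{I}$ are then bounded in the reflexive space $L^q$ and converge weakly to $\nu$. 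Hence it suffices to produce a single $q_0>1$ with $\sum_{I}\nu_{\alpha,\mathbf{w}}(I)^{q_0}=O(2^{-n(q_0-1)})$; since $\mathrm{spt}(\nu_{\alpha,\mathbf{w}})\subset(-\xi,\xi)$ is bounded, membership in such an $L^{q_0}$ automatically gives membership in $L^{q'}$ for every $q'\in(1,q_0]$. To locate $q_0$, observe that for the homogeneous family \eqref{S83} the symbolic $L^q$-spectrum of $\nu_{\alpha,\mathbf{w}}$ is $s_q(\alpha,\mathbf{w}):=\frac{\log\sum_{i=1}^{m}w_i^{\,q}}{(q-1)\log r_\alpha}$, and $s_q\to\dim_{\rm S}(\nu_{\alpha,\mathbf{w}})$ as $q\downarrow 1$ by a standard computation; since $\dim_{\rm S}(\nu_{\alpha,\mathbf{w}})>1$ and $q\mapsto s_q$ is continuous, there is $q_0>1$ with $s_{q_0}>1$, so $\min\{1,s_{q_0}\}=1$.

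The crux is then to show that for $\alpha$ outside a set $E$ with $\dim_{\rm H}E=0$ the $L^{q_0}$-dimension of $\nu_{\alpha,\mathbf{w}}$ attains its maximal value $\min\{1,s_{q_0}\}=1$, with uniform constants. Here I would write $\nu_{\alpha,\mathbf{w}}$ as the infinite convolution $\ast_{j\ge 0}\,(S_{r_\alpha^{jN}})_{\ast}\nu^{(N)}_{\alpha,\mathbf{w}}$ of geometrically rescaled copies of its $N$-th level approximation and invoke the theory of $L^q$-dimensions of self-similar measures (Shmerkin, building on Theorem \ref{S78}): by Hochman's inverse theorem for the entropy of convolutions and the accompanying $L^q$-flattening lemma, any deficiency in the $L^{q}$-dimension of $\nu_{\alpha,\mathbf{w}}$ would force a super-exponential concentration of the cylinder endpoints $\{\varphi^{\alpha}_{\mathbf{i}}(0)\}_{\mathbf{i}\in\Sigma^{n}}$, i.e.\ $\alpha$ would fail Condition H and hence lie in the set $E$ of \eqref{S76}. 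Since the functions \eqref{S79} are real analytic and the Non-Degeneracy Condition holds, Theorem \ref{S77} gives $\dim_{\rm H}E=0$; moreover $E$ depends only on the maps $\{\varphi^{\alpha}_{i}\}$ and not on $\mathbf{w}$, so a single $E$ serves all probability vectors simultaneously. For $\alpha\notin E$ we then obtain $L^{q_0}$-dimension exactly $1$, and in the strict regime $s_{q_0}>1$ the quantitative form of the estimate yields the uniform bound $\sum_{I}\nu_{\alpha,\mathbf{w}}(I)^{q_0}=O(2^{-n(q_0-1)})$ demanded in the first step. Passing from the asymptotic dimension statement to a bound with an absolute constant is the one genuinely delicate technical point: it is handled by running the argument with an exponent slightly above $q_0$ — still admissible because $s_q>1$ on a neighbourhood of $q_0$ — and exploiting the concavity of the $L^q$-spectrum of a self-similar measure.

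The main obstacle is the second step: the whole scheme rests on Hochman's inverse entropy theorem and its $L^q$-counterpart, which is precisely the deep ingredient and the one that cannot be reduced to routine harmonic analysis; by contrast, the $L^q$ reformulation of absolute continuity, the choice of $q_0$ from $\dim_{\rm S}>1$, and the identification of the exceptional parameter set with the one already handled by Theorem \ref{S77} are all comparatively standard.
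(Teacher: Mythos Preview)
The paper does not prove Theorem~\ref{S85}; it is quoted from \cite{shmerkin2014absolute} as a black box under the header ``Theorems we use from the literature''. So there is no proof in the paper to compare yours against. That said, the structure of the original Shmerkin--Solomyak argument is visible in the paper's proof of Theorem~\ref{S65}, where it is adapted to the Sierpi\'nski-carpet family, and it is genuinely different from what you propose. Their scheme is: write $\nu_\alpha=\eta'_\alpha*\eta''_\alpha$ by splitting the infinite self-similar convolution along an arithmetic progression of levels; use Hochman's theorem to force $\dim_{\rm H}\eta'_\alpha=1$ outside a zero-dimensional parameter set; use an Erd\H{o}s--Kahane argument to obtain power Fourier decay for $\eta''_\alpha$ outside another zero-dimensional set; then the convolution of a dimension-$1$ measure with a measure in $\mathcal{D}$ is absolutely continuous with $L^q$ density. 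The Fourier-decay factor is precisely what upgrades ``dimension $1$'' to ``$L^q$ density'' --- a step your outline handles by other means.

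Your route via the $L^q$-dimension machinery is not the Shmerkin--Solomyak proof; it is Shmerkin's later $L^q$ inverse theorem for self-similar measures, which is a substantially deeper result than Theorem~\ref{S85} and postdates it. Invoking it here is circular in spirit if not in logic. More concretely, there is a real gap at the point you yourself flag: knowing that the $L^{q}$-dimension equals $1$ gives only $\sum_I\nu(I)^{q}\le C_\varepsilon\,2^{-n(q-1)(1-\varepsilon)}$ for every $\varepsilon>0$, not the uniform bound $O(2^{-n(q-1)})$ required for $L^q$ density. Your proposed fix --- run at $q_1>q_0$ and interpolate --- does not close this: H\"older between $q_0$ and $q_1$ turns an $\varepsilon$-lossy bound at $q_1$ into an $\varepsilon$-lossy bound at $q_0$, and concavity of $\tau(q)$ gives nothing beyond the dimension statement. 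In the actual Shmerkin--Solomyak argument this issue never arises, because the polynomial Fourier decay of the second factor provides exactly the extra room needed to pass from $\dim_{\rm H}\eta'_\alpha=1$ to an honest $L^q$ density for the convolution.
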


\subsection{An extension of B\'ar\'any-Rams Theorem}

 L\' {i}dia Torma  realized in her Master's Thesis  \cite{Lidia} that the proof of
B\'ar\'any and Rams \cite[Theorem 1.2]{barany2014dimension}, related to the projections of general self-similar  carpets, works in a much more general setup, without any essential change.

\begin{theorem}[Extended version of B\'ar\'any-Rams Theorem]\label{S32}
Given an $a\in\mathbb{R}\setminus\left\{0\right\}$. Let  $\mathcal{T}=\left\{n \cdot a\right\}_{n\in \mathbb{Z}}$ be the corresponding lattice on $\mathbb{R}$.
Moreover,  given the self-similar IFS on the line of the form:
  \begin{equation}\label{S33}
    \mathcal{S}:=\left\{S_i(x):=\frac{1}{L} \cdot x+t_i\right\}_{i=1}^{m},
  \end{equation}
  where $L\in\mathbb{N}$, $L \geq 2$ and  $t_i\in \mathcal{T}$ for all $i\in\left\{1, \dots ,m\right\}$.
  We are also given a probability vector $\mathbf{w}=(w_1, \dots ,w_m)$ with  rational weights $w_i=p_i/q_i$, $p_i,q_i\in \mathbb{N}\setminus\{0\}$ satisfying
    \begin{equation}\label{S87}
    L\nmid Q:=\mathrm{lcm}\left\{q_1, \dots ,q_m\right\},\quad s=:\dim_{\rm S}\nu =\frac{-\sum\limits_{i=1}^{m}w_i\log w_i}{\log L}>1,
  \end{equation}
 where $\nu$ is the self-similar measure corresponding to the weights $\mathbf{w}$. That is
  $
  \nu=\sum\limits_{i=1}^{m} w_i \cdot \nu\circ S_{i}^{-1}
  $.
  Then we have
  \begin{equation}\label{S35}
    \dim_{\rm H} \nu<1.
  \end{equation}
\end{theorem}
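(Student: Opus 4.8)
The plan is to reduce the statement to a number-theoretic obstruction to absolute continuity, exploiting the lattice structure. Since all maps $S_i$ have contraction ratio $1/L$ with $L\in\mathbb{N}$ and all translations $t_i$ lie in the lattice $\mathcal{T}=a\mathbb{Z}$, the natural projection $\Pi$ sends the symbolic space into a rescaled lattice: concretely, $\Pi(\mathbf{i})=\sum_{k\ge 1} t_{i_k} L^{-(k-1)}$, and after clearing denominators by $L^{n}$, the level-$n$ cylinder $S_{i_1\cdots i_n}(0)$ lands in $a L^{-(n-1)}\mathbb{Z}$. Thus $\nu$ restricted to level $n$ is supported on a lattice of spacing $\asymp a L^{-n}$, with masses given by products $w_{i_1}\cdots w_{i_n}$. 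First I would make this precise: write $\nu = \nu_n * (\text{something})$ or better, observe that $\nu$ is the distribution of $\sum_{k\ge 1} t_{i_k}L^{-(k-1)}$ where the $i_k$ are i.i.d.\ with law $\mathbf{w}$, and that the $n$-th scale discretization $\nu$ is an average of point masses at lattice points with weights that are multiples of $1/Q^n$ (since each $w_i=p_i/q_i$ has denominator dividing $Q$).

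The heart of the argument is an entropy/counting computation comparing the number of distinct lattice positions at scale $L^{-n}$ against the number of symbolic words of length $n$. Because $s=\dim_{\rm S}\nu>1$, there are roughly $m^n$ words (more precisely, the typical word has $\mathbf{w}$-mass $\approx L^{-sn}$, so $e^{H n}$ words with $H/\log L = s>1$), but only $O(L^n \cdot \mathrm{diam})$ lattice sites at spacing $L^{-n}$ inside the bounded support. So by pigeonhole, heavy overlapping of words onto the same lattice point occurs — the measure concentrates atom-like mass (of size $\gg L^{-n}$) on individual lattice points at each scale. Next I would turn this concentration into singularity: if $\nu$ were absolutely continuous, its density $f\in L^1$ would have the property that $L^n \nu(I)\to f$ a.e.\ on scale-$L^{-n}$ intervals $I$, staying bounded; but the lattice concentration forces $L^n \nu(I) \to \infty$ on a set of points carrying definite mass (this is where the condition $L\nmid Q$ is used — it guarantees the denominators $Q^n$ and the lattice scaling $L^n$ stay \emph{arithmetically incompatible}, so the weights cannot "spread out" to match the available lattice resolution and the excess mass genuinely piles up rather than being absorbed). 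The cleanest route is probably a Fourier-analytic one: show $\widehat{\nu}(\xi)\not\to 0$ along a sequence $\xi_k\to\infty$, using that $\widehat{\nu}$ is a Riesz-product-type infinite product $\prod_k \bigl(\sum_i w_i e^{2\pi i t_i \xi L^{-(k-1)}}\bigr)$ and choosing $\xi_k$ a multiple of $(a L^{-1})^{-1} Q^k$ or similar so that many factors become exactly $1$ while the condition $L\nmid Q$ prevents the remaining factors from decaying; by the Riemann–Lebesgue lemma this contradicts $\nu\ll\mathcal{L}\mathrm{eb}$.

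I expect the main obstacle to be the \textbf{role of the coprimality hypothesis} $L\nmid Q$ and making the lattice-concentration argument quantitatively tight enough to yield \emph{singularity} (not merely dimension $<1$, though dimension $<1$ already implies singularity once $\nu$ is not Lebesgue-like). One must check that the arithmetic of $Q^n$ versus $L^n$ really does obstruct cancellation in the Fourier product uniformly in $k$: the factors $\sum_i w_i e^{2\pi i t_i \theta}$ must be shown to be bounded away from $0$ along the chosen frequencies, which requires controlling where these trigonometric sums vanish and arguing that $L\nmid Q$ keeps the relevant phases off the zero set. An alternative and perhaps more robust strategy — and the one actually underlying B\'ar\'any–Rams — is to work with the \emph{symbolic} entropy: define the measure on the lattice $aL^{-n}\mathbb{Z}$ and compute its entropy $H_n$; show $H_n - H_{n-1}$ converges to something $<\log L$, forcing $\dim_{\rm H}\nu = \lim H_n/(n\log L)<1$; here $L\nmid Q$ enters to ensure the conditional entropies do not degenerate. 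I would present the proof along these lines, citing \cite{barany2014dimension} and \cite{Lidia} for the detailed entropy estimates and filling in how the hypotheses \eqref{S87} feed into them.
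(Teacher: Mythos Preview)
The paper does not prove this theorem: it is quoted from the literature, with the remark that Torma's thesis \cite{Lidia} observed that the proof of \cite[Theorem~1.2]{barany2014dimension} goes through verbatim in this generality. Your final plan --- sketch the entropy mechanism and then cite \cite{barany2014dimension} and \cite{Lidia} for the actual estimates --- is therefore exactly what the paper does, so in that sense there is nothing to compare.

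That said, two points in your sketch deserve correction. First, you write that the obstacle is ``making the lattice-concentration argument quantitatively tight enough to yield \emph{singularity} (not merely dimension $<1$\dots)''. This is backwards: the conclusion \eqref{S35} is $\dim_{\rm H}\nu<1$, which is \emph{strictly stronger} than singularity (a measure supported on a set of Hausdorff dimension below $1$ is automatically Lebesgue-singular, as you yourself note in the parenthetical). Consequently your Fourier/Riemann--Lebesgue route, even if it worked, would only deliver $\nu\perp\mathcal{L}\mathrm{eb}$ and would not establish the theorem as stated. Second, in the entropy route you identify correctly that one wants $\limsup (H_n - H_{n-1})<\log L$, but you do not explain the mechanism by which $L\nmid Q$ produces this gap; ``$L\nmid Q$ enters to ensure the conditional entropies do not degenerate'' is a placeholder, not an argument. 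The actual B\'ar\'any--Rams computation exploits that the level-$n$ atoms lie in $aL^{-(n-1)}\mathbb{Z}$ with masses in $Q^{-n}\mathbb{Z}$, and the arithmetic incompatibility of $L$ and $Q$ forces a definite, $n$-independent entropy deficit at each step; filling that in is the whole content of the proof, so if you intend to do more than cite, that is where the work lies.
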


\section{$\mathfrak{Sing}(\mathcal{F}_\alpha,\mu)$ is a $G_\delta$ set}

As we have already mentioned the following result appeared as \cite[Proposition 8.1]{peres2000sixty} in the special case when the family of self-similar measures
is the Bernoulli convolution measures. We  extend the original proof of \cite[Proposition 8.1]{peres2000sixty} to the following much more general situation.

\begin{theorem}\label{S128}
  Let $R\subset \mathbb{R}^d$ be a non-empty bounded open set. Let $U$ be a metric space (the parameter domain). Let $\lambda$ be a finite Radon measure with $\mathrm{spt}(\lambda)\subset R$ (the reference measure). For every $\alpha$ we are given a probability Radon measure $\nu_\alpha$ such that $\mathrm{spt(\nu_\alpha)}\subset R$.
  Let
  \begin{equation}\label{R99}
    \mathcal{C}_R:=\left\{f:R\to[0,1]: f \mbox{ is continuous }\right\}.
  \end{equation}
  For every $F\in\mathcal{C}_R$ we define $\Phi_f:U\to R$
  \begin{equation}\label{R98}
    \Phi_f(\alpha):=\int_R f(x)d\nu_\alpha(x).
  \end{equation}
  Finally, we define
  \begin{equation}\label{R97}
    \mathfrak{Sing}_\lambda\left(\left\{\nu_\alpha\right\}_{\alpha\in U}\right):=
    \left\{\alpha\in U: \nu_\alpha\perp \lambda\right\}.
  \end{equation}
  If $\alpha\mapsto \Phi_f(\alpha)$ is lower semi-continuous then $\mathfrak{Sing}_\lambda\left(\left\{\nu_\alpha\right\}_{\alpha\in U}\right)$ is a $G_\delta$ set.
\end{theorem}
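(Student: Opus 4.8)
The plan is to express $\mathfrak{Sing}_\lambda(\{\nu_\alpha\})$ as a countable intersection of open sets by exploiting the following standard characterization of mutual singularity via a single test function: a probability measure $\nu$ is singular with respect to $\lambda$ if and only if for every $\varepsilon>0$ there exists a continuous $f:R\to[0,1]$ with $\int_R f\,d\nu > 1-\varepsilon$ and $\int_R f\,d\lambda < \varepsilon$. (The "only if" direction uses that a supporting Borel set for $\nu$ disjoint from a $\lambda$-conull set can be squeezed between an open and a compact set by inner/outer regularity of the Radon measures, and then Urysohn's lemma produces the continuous bump; the "if" direction uses a sequence $\varepsilon_n\to 0$, passes to the sets $\{f_n > 1/2\}$, and extracts a Borel set carrying all of $\nu$ and none of $\lambda$ via a Borel–Cantelli / limsup argument.) This reduces the problem to showing that, for fixed rational $\varepsilon>0$, the set of $\alpha$ admitting such an $f$ is open.

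Next I would write, for each rational $\varepsilon>0$,
\begin{equation*}
  G_\varepsilon := \bigcup_{f\in\mathcal{C}_R}\Bigl\{\alpha\in U:\ \Phi_f(\alpha) > 1-\varepsilon \ \text{ and }\ \int_R f\,d\lambda < \varepsilon\Bigr\}.
\end{equation*}
The second condition, $\int_R f\,d\lambda<\varepsilon$, does not involve $\alpha$ at all, so it merely restricts which $f$ are allowed in the union; for each such admissible $f$ the set $\{\alpha:\Phi_f(\alpha)>1-\varepsilon\}$ is open precisely because $\alpha\mapsto\Phi_f(\alpha)$ is lower semi-continuous — this is exactly where the hypothesis is used. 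Hence $G_\varepsilon$ is a union of open sets, so open. By the characterization above, $\mathfrak{Sing}_\lambda(\{\nu_\alpha\}) = \bigcap_{\varepsilon\in\mathbb{Q}^+} G_\varepsilon$, a countable intersection of open sets, i.e. a $G_\delta$ set.

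The main obstacle is the measure-theoretic characterization of singularity by a single continuous test function, and making sure it goes through for general finite Radon measures on a bounded open subset of $\mathbb{R}^d$ rather than, say, on a compact metric space. Concretely: given $\nu\perp\lambda$, pick disjoint Borel sets $B$ (with $\nu(B)=1$) and $B^c$ (with $\lambda(B^c)=$ full mass); since $\lambda$ is Radon and $\nu$ is a Radon probability measure, for any $\delta>0$ choose a compact $K\subset B$ with $\nu(K)>1-\delta$ and an open $V\supset K$ with $\lambda(V)<\delta$ (using outer regularity of $\lambda$ together with $\lambda(K)=0$), then apply Urysohn to get $f$ with $f\equiv 1$ on $K$, $f\equiv 0$ off $V$, $0\le f\le 1$; then $\int f\,d\nu\ge\nu(K)>1-\delta$ and $\int f\,d\lambda\le\lambda(V)<\delta$. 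The converse direction — recovering genuine singularity from the existence of such $f_n$ for all $\varepsilon_n=2^{-n}$ — requires the Borel–Cantelli observation that $\nu$ gives full mass to $E:=\bigcup_N\bigcap_{n\ge N}\{f_n>1/2\}$ while $\lambda(E)=0$, which I would verify by the estimates $\nu(\{f_n\le 1/2\})\le 2\int(1-f_n)\,d\nu < 2\varepsilon_n$ and $\lambda(\{f_n>1/2\})\le 2\int f_n\,d\lambda<2\varepsilon_n$ together with summability of $\varepsilon_n$. Once this lemma is in hand, the $G_\delta$ structure is immediate from lower semi-continuity as sketched above; I do not expect any difficulty there.
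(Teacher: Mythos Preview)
Your proof is correct and follows essentially the same architecture as the paper's: define the candidate $G_\delta$ set as a countable intersection over $\varepsilon$ of unions over admissible $f\in\mathcal{C}_R$ of the superlevel sets $\{\alpha:\Phi_f(\alpha)>1-\varepsilon\}$, invoke lower semi-continuity to get openness, and then verify the two inclusions via the continuous-test-function characterization of singularity, with the forward direction done by inner/outer regularity plus Urysohn exactly as you describe.

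The only noteworthy difference is in the converse direction. The paper takes the sequence $(f_n)$, restricts to $C_\beta:=\mathrm{spt}(\nu_\beta)$, observes that $g_n:=f_n\ind_{C_\beta}\to \ind_{C_\beta}$ in $L^1(\nu_\beta)$, extracts a $\nu_\beta$-a.e.\ convergent subsequence, and then applies dominated convergence with respect to $\lambda$ on the convergence set to conclude it is $\lambda$-null. Your route via the Markov-type estimates $\nu_\alpha(\{f_n\le 1/2\})<2\varepsilon_n$ and $\lambda(\{f_n>1/2\})<2\varepsilon_n$ together with Borel--Cantelli on the liminf set $E=\bigcup_N\bigcap_{n\ge N}\{f_n>1/2\}$ is a bit more direct: it avoids both the restriction to $\mathrm{spt}(\nu_\beta)$ and the passage through an $L^1$-to-a.e.\ subsequence, at the small cost of needing the $\varepsilon_n$ to be summable (which is harmless). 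Both arguments are standard; yours is marginally more elementary.
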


\begin{proof}
Recall that $\nu_\alpha$ is a probability measure for all $\alpha$. Note that
without loss of generality we may assume that $\lambda$ is also a probability measure on $R$.
For every $\varepsilon>0$ we define
$$
\mathcal{A}_\varepsilon:=
\left\{f\in\mathcal{C}_R:\ \int f(x)d\lambda(x)<\varepsilon
\right\}.
$$
We follow the proof of \cite[Proposition 8.1]{peres2000sixty} and a suggestion of an unknown referee. First we fix an arbitrary sequence $\varepsilon_n\downarrow 0$
 and then  define
$$
S_\bot:=
\bigcap\limits_{n=1}^{\infty }
\bigcup\limits_{{f\in\mathcal{A}_{\varepsilon_n}}}
\left\{\alpha\in U: \Phi_f(\alpha)>1-\varepsilon_n\right\}.
$$
Since we assumed that $\alpha\mapsto\Phi_f(\alpha)$ is lower semi-continuous, the set
$\left\{\alpha\in U: \Phi_f(\alpha)>1-\varepsilon_n\right\}$ is open. That is
$S_\bot$ is a $G_\delta$ set.
Hence it is enough to prove that
\begin{equation}\label{S120}
 \mathfrak{Sing}_\lambda\left(\left\{\nu_\alpha\right\}_{\alpha\in U}\right)
 =S_\bot.
\end{equation}
First we prove that $ \mathfrak{Sing}_\lambda\left(\left\{\nu_\alpha\right\}_{\alpha\in U}\right)\subseteq S_\bot.$
Let $\beta\in  \mathfrak{Sing}_\lambda\left(\left\{\nu_\alpha\right\}_{\alpha\in U}\right)$. Fix an arbitrary $\varepsilon>0$.
Then by definition we can find a $T\subset R$ such that
\begin{equation}\label{S121}
\nu_\beta(T)=1,\qquad \lambda(T)=0.
\end{equation}
Recall that both $\lambda$  and $\nu_\beta$ are Radon probability measures. So we can choose a compact $C_\varepsilon\subset T$ such that
\begin{equation}\label{S122}
  \nu_\beta(C_\varepsilon)>1-\varepsilon,\
  \lambda(C_\varepsilon)=0.
\end{equation}
Using that $\lambda$ is a Radon measure, we can choose an open set $V_\varepsilon\subset R$ such that $C_\varepsilon\subset V_\varepsilon$ and
 $\lambda(V_\varepsilon)<\varepsilon$.
We can choose an $f_\varepsilon\in\mathcal{C}_R$
 such that $\mathrm{spt}(f_\varepsilon)\subset V_\varepsilon$ and $f_\varepsilon|_{C_\varepsilon}\equiv 1$ (see \cite[p. 39]{rudin1986real}).

Then $\int f_\varepsilon d\lambda(x) \leq \lambda(V_\varepsilon)<\varepsilon$ (that is $f_\varepsilon\in \mathcal{A}_\varepsilon$) and
$\int f_\varepsilon(x)d\nu_\beta(x) \geq \nu_\beta(C_\varepsilon)>1-\varepsilon$. Since $\varepsilon>0$ was arbitrary we obtain that $\beta\in S_\bot$.

Now we prove that $ S_\bot\subseteq\mathfrak{Sing}_\lambda\left(\left\{\nu_\alpha\right\}_{\alpha\in U}\right).$
Let $\beta\in S_\bot$. Then for every $n$ there exists an $f_n\in\mathcal{C}_R$  such that
\begin{equation}\label{S123}
  \int f_{n}(x)d\nu_\beta(x)>1-\varepsilon_n \mbox{ and }
  \int f_n d\lambda(x)<\varepsilon_n.
\end{equation}
Let $C_\beta:=\mathrm{spt}(\nu_\beta)$. Clearly, $C_\beta$ is compact and  $C_\beta\subset R$. We define
$$
g_n:=f_n\ind_{C_\beta}, \mbox{ and }
g:=\ind_{C_\beta}.
$$
Clearly, $0 \leq g_n(x) \leq g(x)$ for all $x\in C_\beta$ and
$$
\int g(x) d\nu_\beta(x)=1,\
 \int g_{n}(x)d\nu_\beta(x)>1-\varepsilon_n \mbox{ and }
  \int g_n d\lambda(x)<\varepsilon_n.
$$
Hence,
$$
g_n\stackrel{L_1(\nu_\beta)}{\longrightarrow}g.
$$
Thus, we can select a subsequence  $g_{n_k}$ such that $g_{n_k}(x)\to g(x)$ for $\nu_\beta$- almost all $x\in C_\beta$. Let
$$
D_\beta:=
\left\{x\in C_\beta:
g_{n_k}(x)\to g(x)
\right\}.
$$
Then on the one hand we have
\begin{equation}\label{S124}
  \nu_\beta( D_\beta)=1.
\end{equation}
On the other hand using the Lebesgue Dominated Convergence Theorem:
 \begin{multline}\label{S125}
   \lambda(D_\beta)=\int_{D_\beta} g(x)d\lambda(x)=
 \int_{D_\beta}  \lim\limits_{k\to\infty} g_{n_k}(x) d\lambda(x)
    \\
   =\lim\limits_{k\to\infty} \int_{D_\beta} g_{n_k}(x)d\lambda(x) \leq \lim\limits_{k\to\infty} \varepsilon_{n_k}=0.
 \end{multline}
 Putting together \eqref{S124} and \eqref{S125} we obtain that
 $\beta\in\mathfrak{Sing}_\lambda\left(\left\{\nu_\alpha\right\}_{\alpha\in U}\right)$.
\end{proof}

\begin{theorem}\label{S11}
We consider one-parameter families of measures $\nu_\alpha$ on $\mathbb{R}^d$ for some $d \geq 1$, which are constructed as follows: The
parameter space $U$ is a non-empty compact metric space.
We are given a continuous mapping
\begin{equation}\label{R92}
  \Pi:U\times\Omega\to R\subset\mathbb{R}^d,
\end{equation}
where $R$ is an open ball in $\mathbb{R}^d$ and
$\Omega$ is a compact metric space
 (in our applications $U$ is a compact interval, $\Omega=\Sigma$ and $\Pi_\alpha$ is the natural projection corresponding to the parameter $\alpha$).
Moreover let $\mu$ be a probability
 Radon measure on $\Omega$.
 (In our applications $\mu$ is Bernoulli measure on $\Sigma$.)
  For every $\alpha\in U$ we define
\begin{equation}\label{S61}
  \nu_\alpha:=(\Pi_\alpha)_*\mu.
\end{equation}
Clearly, $\nu_\alpha$ is a Radon measure whose support is contained in $R$.
Finally let $\lambda$ be a Radon  (reference) measure whose  support is also contained  in $R$. (In our applications $\lambda$ is the Lebesgue measure $\mathcal{L}\mathrm{eb}_d$ restricted to $R$.)

Then the  set of parameters of singularity
\begin{equation}\label{S63}
  \mathfrak{Sing}_\lambda(\Pi_\alpha,\mu):=
  \left\{\alpha\in U:
  \nu_\alpha\bot \lambda
  \right\}
\end{equation}
  is a $G_\delta$ set.
\end{theorem}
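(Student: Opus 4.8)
The strategy is to deduce Theorem \ref{S11} from Theorem \ref{S128}; the only thing that needs to be verified is that the family $\{\nu_\alpha\}_{\alpha\in U}$ obtained as the push-forwards $(\Pi_\alpha)_*\mu$ of a fixed Radon probability measure $\mu$ under a jointly continuous map $\Pi\colon U\times\Omega\to R$ satisfies the lower semi-continuity hypothesis of Theorem \ref{S128}, namely that for each $f\in\mathcal{C}_R$ the map $\alpha\mapsto \Phi_f(\alpha)=\int_R f\,d\nu_\alpha$ is lower semi-continuous on $U$. In fact we get more: this map will be genuinely continuous, which is more than enough. Once this is established, taking $\lambda$ to be the given reference Radon measure (in the applications $\mathcal{L}\mathrm{eb}_d$ restricted to $R$), Theorem \ref{S128} immediately yields that $\mathfrak{Sing}_\lambda(\Pi_\alpha,\mu)=\mathfrak{Sing}_\lambda(\{\nu_\alpha\}_{\alpha\in U})$ is a $G_\delta$ set, which is the assertion.

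The key computation is the change of variables
\[
  \Phi_f(\alpha)=\int_R f(x)\,d\nu_\alpha(x)
  =\int_\Omega f\bigl(\Pi_\alpha(\omega)\bigr)\,d\mu(\omega),
\]
valid by the definition $\nu_\alpha=(\Pi_\alpha)_*\mu$ and the standard push-forward identity. Now fix $\alpha_0\in U$ and a sequence $\alpha_k\to\alpha_0$ in $U$ (metric space, so sequential continuity suffices). For each $\omega\in\Omega$, joint continuity of $\Pi$ gives $\Pi_{\alpha_k}(\omega)\to\Pi_{\alpha_0}(\omega)$ in $R$, hence $f(\Pi_{\alpha_k}(\omega))\to f(\Pi_{\alpha_0}(\omega))$ since $f$ is continuous. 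Moreover $0\le f(\Pi_{\alpha_k}(\omega))\le 1$ for all $k$ and all $\omega$ because $f$ takes values in $[0,1]$, so the constant function $1\in L^1(\mu)$ dominates (here we use that $\mu$ is a probability measure, or at least finite). By the Lebesgue Dominated Convergence Theorem,
\[
  \Phi_f(\alpha_k)=\int_\Omega f(\Pi_{\alpha_k}(\omega))\,d\mu(\omega)
  \;\longrightarrow\;
  \int_\Omega f(\Pi_{\alpha_0}(\omega))\,d\mu(\omega)=\Phi_f(\alpha_0).
\]
Thus $\alpha\mapsto\Phi_f(\alpha)$ is continuous, in particular lower semi-continuous, for every $f\in\mathcal{C}_R$.

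Finally one should note the bookkeeping points so that Theorem \ref{S128} applies verbatim: $R$ is a non-empty bounded open set (an open ball), $U$ is a metric space, $\lambda$ is a finite Radon measure with support in $R$ (finiteness holds since $R$ is bounded and $\lambda$ is Radon, or one restricts $\lambda$ to $\overline R$), each $\nu_\alpha$ is a Radon probability measure supported in $R$ (being the push-forward of the Radon probability $\mu$ under the continuous map $\Pi_\alpha$, with $\mathrm{spt}(\nu_\alpha)\subset\overline{\Pi_\alpha(\Omega)}\subset R$ using compactness of $\Omega$), and we have just checked the semi-continuity hypothesis. Hence Theorem \ref{S128} gives that $\mathfrak{Sing}_\lambda(\Pi_\alpha,\mu)$ is $G_\delta$, completing the proof. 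There is no serious obstacle here: the entire content is the Dominated Convergence argument above, and the boundedness of $f$ by $1$ together with finiteness of $\mu$ is exactly what makes the dominating function trivially available — this is why the codomain $[0,1]$ was built into $\mathcal{C}_R$ in Theorem \ref{S128}.
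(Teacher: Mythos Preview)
Your proposal is correct and takes essentially the same approach as the paper: reduce to Theorem \ref{S128} by verifying that $\alpha\mapsto\Phi_f(\alpha)$ is continuous via the change-of-variables identity $\Phi_f(\alpha)=\int_\Omega f(\Pi_\alpha(\omega))\,d\mu(\omega)$. The only cosmetic difference is in the last step: the paper exploits compactness of $U\times\Omega$ to get uniform continuity of $(\alpha,\omega)\mapsto f(\Pi_\alpha(\omega))$ and then a direct $\varepsilon$--$\delta$ estimate, whereas you use pointwise convergence plus the Dominated Convergence Theorem; both are valid and equally short.
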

\begin{proof}
  This theorem immediately follows from Theorem \ref{S128}\  if we prove that for every $f\in\mathcal{C}_R$ the function $\Phi_f(\cdot)$ is continuous.
To see this we
set $\psi:U\times \Omega\to \mathbb{R}$,
$$
\psi(\alpha,\omega)=f(\Pi_\alpha(\omega)),\mbox{ then }
\Phi_f(\alpha):=\int f(x)d\nu_\alpha(x)=\int \psi(\alpha,\omega)d\mu(\omega),
$$
where the last equality follows from the change of variables formula.
By compactness, $\psi$ is uniformly continuous. Hence for every $\varepsilon>0$ we can choose $\delta>0$ such that whenever $\mathrm{dist}\left((\alpha_1,\omega),(\alpha_2,\omega)\right)<\delta$ then
$|\psi\left(\alpha_1,\omega\right)-\psi\left(\alpha_2,\omega\right)|<\varepsilon$, where $\mathrm{dist}((\alpha_1,\omega),(\alpha_2,\omega)):=
\max\left\{dist_U(\alpha_1,\alpha_2),
dist_\Omega(\omega_1,\omega_2)
\right\}$. Using that $\mu$ is a probability measure, we obtain that $|\Phi_f(\alpha_1)-\Phi_f(\alpha_2)|<\varepsilon$ whenever $\mathrm{dist}_U(\alpha_1,\alpha_2)<\delta$.
\end{proof}


\begin{corollary}\label{S64}Using the notation of Section \ref{S99} and assuming our Principal Assumption (defined on page \pageref{S114}) we obtain that  the set of parameters of singularity $\mathfrak{Sing}(\mathcal{F}_\alpha,\mu)$
 is a $G_\delta$ set.
\end{corollary}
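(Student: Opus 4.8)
The plan is to reduce the statement to the general criterion of Theorem~\ref{S128} (the same reduction used in the proof of Theorem~\ref{S11}, but now for a non-compact parameter domain). I would take $U:=A$, $\Omega:=\Sigma$, $\mu:=\mathbf{w}^{\mathbb N}$, the bounded open set $R:=(-\xi,\xi)$ supplied by \eqref{S30}, the reference measure $\lambda:=\mathcal{L}\mathrm{eb}|_R$ (a finite Radon measure), and $\nu_\alpha:=(\Pi_\alpha)_*\mu$. With these identifications $\mathfrak{Sing}(\mathcal{F}_\alpha,\mu)$ is exactly the set $\mathfrak{Sing}_\lambda(\{\nu_\alpha\}_{\alpha\in A})$ of \eqref{R97}, so by Theorem~\ref{S128} it suffices to check that for every $f\in\mathcal{C}_R$ the function $\alpha\mapsto\Phi_f(\alpha)=\int_R f\,d\nu_\alpha$ is lower semi-continuous on $A$; I will in fact prove it is continuous.

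The key step is joint continuity of the natural projection, viewed as a single map $\Pi:A\times\Sigma\to\mathbb R$, $(\alpha,\mathbf{i})\mapsto\Pi_\alpha(\mathbf{i})$. By \textbf{P2} and \textbf{P3} the $k$-th term of the series \eqref{S31} is bounded in absolute value by $t_{\max}^*\,r_{\max}^{\,k-1}$ uniformly in $(\alpha,\mathbf{i})$, so the series converges uniformly on $A\times\Sigma$; each partial sum is continuous there, because it depends on $\mathbf{i}$ only through finitely many coordinates and on $\alpha$ only through the functions that are continuous by \textbf{P4}. A uniform limit of continuous functions being continuous, $\Pi$ is continuous, hence so is $\psi(\alpha,\mathbf{i}):=f(\Pi_\alpha(\mathbf{i}))$, which in addition is $[0,1]$-valued (and well defined since $\mathrm{spt}(\nu_\alpha)=\Pi_\alpha(\Sigma)\subset R$). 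By the change-of-variables formula $\Phi_f(\alpha)=\int_\Sigma\psi(\alpha,\mathbf{i})\,d\mu(\mathbf{i})$.

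To get continuity of $\Phi_f$ at a point $\alpha_0\in A$ I would use that $A$ is open: choose a closed interval $I\subset A$ that is a neighbourhood of $\alpha_0$. Then $I\times\Sigma$ is compact, so $\psi$ is uniformly continuous on it, and for $\alpha\in I$,
\[
|\Phi_f(\alpha)-\Phi_f(\alpha_0)|\le\int_\Sigma|\psi(\alpha,\mathbf{i})-\psi(\alpha_0,\mathbf{i})|\,d\mu(\mathbf{i})\le\sup_{\mathbf{i}\in\Sigma}|\psi(\alpha,\mathbf{i})-\psi(\alpha_0,\mathbf{i})|,
\]
which tends to $0$ as $\alpha\to\alpha_0$ (alternatively, dominated convergence with dominating constant $1$ does the job). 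Thus $\Phi_f$ is continuous, a fortiori lower semi-continuous, on $A$, and Theorem~\ref{S128} yields that $\mathfrak{Sing}(\mathcal{F}_\alpha,\mu)$ is a $G_\delta$ subset of $A$.

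I do not expect a genuine obstacle: the corollary is a verification that \textbf{P1}--\textbf{P4} place us in the setting of Theorem~\ref{S128}. The only points requiring a little care are bookkeeping ones. First, $A$ is open rather than compact, so Theorem~\ref{S11} cannot be quoted verbatim and the continuity of $\Phi_f$ must be obtained locally in $\alpha$ as above (or by exhausting $A$ with compact subintervals). Second, $\mathcal{L}\mathrm{eb}|_R$ strictly speaking has support $\overline R\not\subset R$; this is immaterial, since $\mathrm{spt}(\nu_\alpha)\subset R$ for every $\alpha$, so mutual singularity of $\nu_\alpha$ with Lebesgue measure is unaffected by enlarging $R$ slightly and restricting $\lambda$ to that larger interval.
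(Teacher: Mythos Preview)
Your proof is correct and follows essentially the same approach as the paper: verify that the natural projection is jointly continuous (using \textbf{P2}--\textbf{P4}) so that $\Phi_f$ is continuous, and then invoke the $G_\delta$ criterion. The only difference is a matter of bookkeeping around compactness: the paper simply says the Principal Assumptions place us in the setting of Theorem~\ref{S11}, implicitly using \textbf{P4} to pass to the compact closure $\overline{A}$ (after which one restricts back to the open set $A$), whereas you bypass Theorem~\ref{S11} and apply Theorem~\ref{S128} directly on the open interval $A$ via a local compactness argument---both are equally valid and amount to the same thing.
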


The proof is obvious since our Principal Assumptions imply that  the conditions of Theorem \ref{S11} hold.


To derive another corollary we need the following fact. It is well known, but we could not look it up in the literature, therefore we include its proof here.

\begin{fact}
Let $H\subset \mathbb{R}^d$ be a $G_\delta$ set which is not a nowhere dense set. Then $\dim_{\rm P} H = d$.
\end{fact}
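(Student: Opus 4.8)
The plan is to reduce the statement to the special case of a dense $G_\delta$ subset of an open ball, and then to exploit the Baire category theorem in combination with the covering definition \eqref{S200} of the packing dimension.

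First I would use the hypothesis that $H$ is not nowhere dense: its closure $\overline H$ then contains a non-empty open ball $B\subset\mathbb{R}^d$. A short argument shows that $G:=H\cap B$ is dense in $B$: given $x\in B$, pick $r>0$ with $\overline{B(x,r)}\subset B$; since $x\in\overline H$ there is a point of $H$ in $B(x,r)\subset B$. Writing $H=\bigcap_{n}U_n$ with the $U_n$ open, we have $G=\bigcap_n(U_n\cap B)$, and each $U_n\cap B$ is open and contains the dense set $G$, hence is dense in $B$. Thus $G$ is a dense $G_\delta$, and therefore comeager, subset of the non-empty Baire space $B$. Since $G\subseteq H\subseteq\mathbb{R}^d$, monotonicity of $\dim_{\rm P}$ gives $\dim_{\rm P}G\leq\dim_{\rm P}H\leq d$, so it suffices to prove $\dim_{\rm P}G\geq d$.

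To this end, let $G\subseteq\bigcup_{i=1}^\infty E_i$ be an arbitrary countable cover as in \eqref{S200}. Replacing $E_i$ by $F_i:=\overline{E_i}\cap\overline B$ keeps $G$ covered (as $G\subseteq\overline B$) and does not increase the upper box dimension, because $\overline{\dim_{\rm B}}$ is monotone and a bounded set has the same upper box dimension as its closure; so we may assume that every $E_i$ is closed and bounded. Now $\bigcup_i(E_i\cap B)$ is a countable union of relatively closed subsets of $B$ and it contains the comeager set $G$; since $B$ is a non-empty Baire space, this union is not meager, so at least one $E_{i_0}\cap B$ fails to be nowhere dense in $B$. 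Being relatively closed in $B$, the set $E_{i_0}\cap B$ then has non-empty interior, hence $E_{i_0}$ contains an open ball $B'\subseteq B$, and therefore $\overline{\dim_{\rm B}}E_{i_0}\geq\overline{\dim_{\rm B}}B'=d$. Consequently $\sup_i\overline{\dim_{\rm B}}E_i=d$ for every admissible cover, so $\dim_{\rm P}G=d$, and hence $\dim_{\rm P}H=d$.

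The step carrying the real content, and the one where the $G_\delta$ assumption is indispensable, is the Baire category argument: packing dimension is not stable under closures (for instance $\dim_{\rm P}\mathbb{Q}=0$ while $\dim_{\rm P}\overline{\mathbb{Q}}=1$), so one cannot simply pass to $\overline H$; it is precisely the comeagreness of a dense $G_\delta$ in $B$ that forces one of the sets in any countable cover to contain a ball. The remaining ingredients --- the density of $H\cap B$ in $B$, the identity $\overline{\dim_{\rm B}}E=\overline{\dim_{\rm B}}\overline E$ for bounded $E$, and $\overline{\dim_{\rm B}}B'=d$ for a ball --- are standard point-set facts.
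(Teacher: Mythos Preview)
Your proof is correct and follows essentially the same route as the paper's: intersect $H$ with a ball $B\subset\overline{H}$ to obtain a dense $G_\delta$ in $B$, then use Baire category to show that in any countable cover one piece must contain a ball in its closure and hence have upper box dimension $d$. Your version is simply more explicit about the details---the density of $H\cap B$, passing to closed bounded covering sets, and why the non-nowhere-dense closed piece has interior---which the paper leaves implicit.
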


\begin{proof}
Since $H$ is not a nowhere dense set, there exist a ball $B$ such that $B\subset \overline{H}$. That is $V:=B\cap H$ is a dense $G_\delta$ set in $B$, that is by Banach's Theorem  $V$ is not a set of first category. So, if
$V\subset \cup_{i=1}^{\infty }E_i$ then there exists an $i$ such that $E_i$ is not nowhere dense in $B$. That is there exists a ball $B'\subset B$ such that $B'\subset \overline{E}_i$.
Then $\dim_{\rm B} E_i=d$. Hence by \eqref{S200} we have $\dim_{\rm P}H \geq \dim_{\rm P}V=d$. On the other hand, $\dim_{\rm P}H  \leq d$ always holds.
\end{proof}

Applying this for $\mathfrak{Sing}(\mathcal{F}_\alpha,\mu)$ we obtain that

\begin{corollary}\label{S115}
  Under the conditions of Theorem \ref{S11}, for the set of parameters of singularity $\mathfrak{Sing}(\mathcal{F}_\alpha,\mu)$ the following holds:
 \begin{description}
   \item[(i)] Either $\mathfrak{Sing}(\mathcal{F}_\alpha,\mu)$ is nowhere dense or
   \item[(ii)] $\dim_{\rm P} \left(\mathfrak{Sing}(\mathcal{F}_\alpha,\mu)\right)=d$.
 \end{description}

\end{corollary}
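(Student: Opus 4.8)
The plan is to simply juxtapose the two results just established, since Corollary~\ref{S115} is nothing more than their combination. First I would invoke Theorem~\ref{S11} (equivalently, Corollary~\ref{S64}): under the stated hypotheses the set $\mathfrak{Sing}(\mathcal{F}_\alpha,\mu)$ is a $G_\delta$ subset of the parameter space. In the situations we care about the parameter domain is (a compact subinterval of) $\mathbb{R}$, so $\mathfrak{Sing}(\mathcal{F}_\alpha,\mu)$ is in particular a $G_\delta$ subset of $\mathbb{R}^d$ with $d=1$; the argument works verbatim whenever $U\subseteq\mathbb{R}^d$ for some $d$.

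Then I would apply the \textbf{Fact} proved above, namely that a $G_\delta$ subset of $\mathbb{R}^d$ which is not nowhere dense has packing dimension exactly $d$. Applied to $H=\mathfrak{Sing}(\mathcal{F}_\alpha,\mu)$ this yields precisely the dichotomy in the statement: either $\mathfrak{Sing}(\mathcal{F}_\alpha,\mu)$ is nowhere dense, which is alternative (i), or it is not nowhere dense and then $\dim_{\rm P}\mathfrak{Sing}(\mathcal{F}_\alpha,\mu)=d$, which is alternative (ii).

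There is no genuine obstacle: all the substance lives in Theorem~\ref{S11} (the $G_\delta$ property, whose proof rests on the lower semi-continuity supplied by the continuity of $\Phi_f$) and in the elementary Baire-category argument behind the \textbf{Fact}; the corollary is merely their formal consequence. The part that actually requires work — and which is carried out via the motivating example and Proposition~\ref{S102} — is showing that alternative (ii) genuinely occurs, i.e. that Properties \textbf{P5A}/\textbf{P5B} can be arranged so that $\mathfrak{Sing}(\mathcal{F}_\alpha,\mu)$ is a dense $G_\delta$ and hence $\dim_{\rm P}\mathfrak{Sing}(\mathcal{F}_\alpha,\mu)=1$, in sharp contrast with Hochman's Theorem~\ref{S77}.
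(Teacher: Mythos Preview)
Your proposal is correct and matches the paper's own argument exactly: the paper simply notes that $\mathfrak{Sing}(\mathcal{F}_\alpha,\mu)$ is $G_\delta$ by Theorem~\ref{S11} and then applies the preceding \textbf{Fact} to obtain the dichotomy. Your closing remarks about where the real content lies (and about alternative~(ii) actually occurring) are accurate but go beyond what is needed for the corollary itself.
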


Henna Koivusalo called the attention of the authors for the following immediate corollary of 
Theorem \ref{S11}:

\begin{remark}\label{R91}
  Let $\mu$ be a compactly supported Borel measure on $\mathbb{R}^2$ with $\dim_{\rm H} \mu>1$. Let $\nu_\alpha:=(\mathrm{proj}_\alpha)_*\mu$. Then
Theorem \ref{S11} immediately implies that either the singularity set

$$
\mathfrak{Sing}_{\mathcal{L}\mathrm{eb}}\left(\left\{\nu_\alpha\right\}_{\alpha\in [0,\pi)}\right)=\left\{\alpha\in[0,\pi):\nu_\alpha\perp\mathcal{L}\mathrm{eb}_1 \right\}.
$$ or its complement is big in topological sense. More precisely,
\begin{description}
  \item[(a)] Either $\mathfrak{Sing}_{\mathcal{L}\mathrm{eb}}\left(\left\{\nu_\alpha\right\}_{\alpha\in [0,\pi)}\right)$ is a residual subset of $[0,\pi)$ or
  \item[(b)] $\left(\mathfrak{Sing}_{\mathcal{L}\mathrm{eb}}\left(\left\{\nu_\alpha\right\}_{\alpha\in [0,\pi)}\right)\right)^c$ contains an interval.
\end{description}
We remind the reader that a set is called  residual if is its complement is a set of first category and residual sets are considered as "big" in topological sense. 


In contrast we recall that   by Kaufman's Theorem (see  e.g. \cite[Theorem 9.7]{mattila1999geometry})
  we have
  \begin{equation}\label{R90}
    \nu_\alpha\ll \mathcal{L}\mathrm{eb}_1 \mbox{ for $\mathcal{L}\mathrm{eb}_1$
    almost all } \alpha\in[0,\pi).
  \end{equation}
\end{remark}


The following theorem shows that there are reasons other than exact overlaps for the singularity of self-similar measures having similarity dimension greater than one.

\begin{theorem}\label{S65}
  Using the notation of our Example \ref{S1} (angle-$\alpha$ projections of the Sierpi\'nski carpet), we obtain that
 \begin{equation}\label{S66}
\mathfrak{Sing}(\mathcal{S}_\alpha,\mu)=\left\{\alpha\in A:\nu_\alpha\bot\mathcal{L}\mathrm{eb}\right\} \mbox{ is a dense $G_\delta$ set}
\end{equation}
and
\begin{equation}\label{S67}
\dim_{\rm H} \left(\mathfrak{Cont}_Q(\mathcal{S}_\alpha,\mu)^c\right)=0.
\end{equation}
That is $(\mathcal{S}_\alpha,\mu)$ is antagonistic in the sense of Definition \ref{S58}.
\end{theorem}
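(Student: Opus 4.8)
The plan is to verify the two defining conditions of an antagonistic family (Definition \ref{S58}) for the pair $(\mathcal{S}_\alpha,\mu)$ of Example \ref{S1}, assembling the pieces from the results already collected above. Recall that in this example $r_{\alpha,i}\equiv 1/3$ for all $i$ and $t_i^{(\alpha)}=\mathbf{t}_i\cdot(\cos\alpha,\sin\alpha)$, so the family is a homogeneous one-parameter family of self-similar IFS on the line with $8$ maps, each with contraction ratio $1/3$, and $\mu=(\tfrac18,\dots,\tfrac18)^{\mathbb N}$. Its similarity dimension is the constant $\dim_{\rm S}\nu_\alpha=\log 8/\log 3>1$, independent of $\alpha$.

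For \eqref{S66} I would argue as follows. The family plainly satisfies Principal Assumption \textbf{P1}--\textbf{P4} (with $A=(0,\pi)$, $r_{\min}=r_{\max}=1/3$, the translation parameters uniformly bounded and real-analytic, hence continuous on $\overline{A}$). Therefore Corollary \ref{S64} (equivalently Theorem \ref{S11}) applies and gives that $\mathfrak{Sing}(\mathcal{S}_\alpha,\mu)$ is a $G_\delta$ subset of $A$. It remains to show this $G_\delta$ set is dense, i.e.\ Property \textbf{P5A}. For this I invoke the result of Manning and Simon \cite[p.~216]{manning2013dimension}, which establishes exactly that the angle-$\alpha$ projections of the natural measure of the Sierpi\'nski carpet are singular for a dense set of angles $\alpha$. (Alternatively one can produce density directly: for angles $\alpha$ with $\tan\alpha$ rational the projected translations $t_i^{(\alpha)}$ lie on a common lattice, $r_\alpha^{-1}=3$ fails to divide the relevant lcm after rescaling, and the Extended B\'ar\'any--Rams Theorem \ref{S32} forces $\dim_{\rm H}\nu_\alpha<1$; since $\dim_{\rm S}\nu_\alpha>1$ this dimension drop entails $\nu_\alpha\perp\mathcal{L}\mathrm{eb}$, and rational-slope angles are dense in $(0,\pi)$.) Combining density with the $G_\delta$ property yields \eqref{S66}.

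For \eqref{S67} I would appeal to the Shmerkin--Solomyak Theorem \ref{S85}. First one checks its hypotheses for $\mathcal{S}_\alpha$: the maps are homogeneous of the form \eqref{S83} with $r_\alpha\equiv 1/3$; the parametrizations $\alpha\mapsto r_\alpha$ and $\alpha\mapsto t_i^{(\alpha)}=\mathbf{t}_i\cdot(\cos\alpha,\sin\alpha)$ are real-analytic; and the Non-Degeneracy Condition \eqref{S81} holds because two distinct symbolic sequences $\mathbf{i}\ne\mathbf{j}$ project to distinct points of the carpet under $\Pi$ (the carpet satisfies the open set condition), and a point of $\mathbb{R}^2$ has the same projection onto $\ell_\alpha$ for at most one $\alpha\in(0,\pi)$, so the set of bad angles is a single point and in particular $\ne A$. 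Hence Theorem \ref{S85} provides an exceptional set $E\subset A$ with $\dim_{\rm H}E=0$ such that for every $\alpha\in E^c$, since $\dim_{\rm S}\nu_\alpha=\log 8/\log 3>1$, the measure $\nu_\alpha$ is absolutely continuous with an $L^q$ density for some $q>1$, i.e.\ $\alpha\in\mathfrak{Cont}_Q(\mathcal{S}_\alpha,\mu)$. Thus $\mathfrak{Cont}_Q(\mathcal{S}_\alpha,\mu)^c\subseteq E$, whence $\dim_{\rm H}\big(\mathfrak{Cont}_Q(\mathcal{S}_\alpha,\mu)^c\big)\le\dim_{\rm H}E=0$, giving \eqref{S67}. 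Finally $\mathfrak{Sing}\subseteq(\mathfrak{Cont}_Q)^c$ forces $\dim_{\rm H}\mathfrak{Sing}(\mathcal{S}_\alpha,\mu)=0$ as well, so both clauses of \eqref{S59} hold, and together with \eqref{S60}$=$\eqref{S66} this is precisely the assertion that $(\mathcal{S}_\alpha,\mu)$ is antagonistic.

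The main obstacle is not any single deep estimate --- the two hard inputs (the $G_\delta$ property and the density of singular angles, respectively absolute continuity off a zero-dimensional set) are precisely Theorems \ref{S11}, \ref{S85} and the Manning--Simon density result, which we are allowed to cite. The real care needed is in checking that Example \ref{S1} genuinely satisfies the hypotheses of each imported theorem: verifying \textbf{P1}--\textbf{P4}, verifying the Non-Degeneracy Condition for the projected carpet family (which uses that at most one angle identifies any two given distinct symbolic sequences, so the degenerate set of angles is not all of $A$), and confirming that the dimension-drop or the $L^q$-absolute-continuity dichotomy really forces singularity versus genuine absolute continuity with a density --- i.e.\ that there is no gap between $\mathfrak{Sing}$ and $\mathfrak{Cont}_Q$ left uncovered. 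Once these bookkeeping checks are made, the theorem follows by assembling the cited results.
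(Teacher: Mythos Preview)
Your argument for \eqref{S66} is correct and is exactly what the paper does: Corollary \ref{S64} gives the $G_\delta$ property, and the density of singular angles is the Manning--Simon input \cite{manning2013dimension}.

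Your argument for \eqref{S67}, however, contains a genuine error. You claim that the Non-Degeneracy Condition holds for $\mathcal{S}_\alpha$ because ``two distinct symbolic sequences $\mathbf{i}\ne\mathbf{j}$ project to distinct points of the carpet under $\Pi$ (the carpet satisfies the open set condition).'' This inference is false: the open set condition does \emph{not} imply that the natural projection $\Pi:\Sigma\to\Lambda$ is injective; only the strong separation condition would. For the Sierpi\'nski carpet the first-level squares share edges, so boundary points have multiple symbolic addresses (e.g.\ the point $(1/3,0)$ lies in two distinct first-level cylinders). For any such pair $\mathbf{i}\ne\mathbf{j}$ with $\Pi(\mathbf{i})=\Pi(\mathbf{j})$ one has $\Pi_\alpha(\mathbf{i})=\mathrm{proj}_\alpha(\Pi(\mathbf{i}))=\mathrm{proj}_\alpha(\Pi(\mathbf{j}))=\Pi_\alpha(\mathbf{j})$ for \emph{every} $\alpha$, so the Non-Degeneracy Condition fails. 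The paper says this explicitly: ``This assertion would immediately follow from Shmerkin and Solomyak \cite[Theorem A]{shmerkin2014absolute} if we could guarantee that the Non-Degeneracy Condition holds. Unfortunately in this case it does not hold.'' Consequently you cannot invoke Theorem \ref{S85} as a black box.

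The paper's fix is to open up the Shmerkin--Solomyak proof rather than cite its statement. One writes $\nu_\alpha=\eta'_{\alpha,k}*\eta''_{\alpha,k}$ by splitting the random series along indices divisible by $k$. For the factor $\eta'_{\alpha,k}$ one uses \cite[Theorem 5.3]{shmerkin2015projections}, which for projections of planar self-similar measures gives $\dim_{\rm H}\eta'_{\alpha,k}=1$ outside a countable set \emph{without} requiring Non-Degeneracy. For $\eta''_{\alpha,k}$ one exhibits a concrete non-constant analytic function $f(\alpha)=2\tan\alpha-1$ and applies \cite[Lemma 6.2, Proposition 3.1]{shmerkin2014absolute} to get power Fourier decay off a zero-dimensional set. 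Then \cite[Corollary 5.5]{shmerkin2014absolute} yields the $L^q$ density. Your proof would need to replace the direct appeal to Theorem \ref{S85} with this argument (or an equivalent workaround) to be complete.
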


\begin{proof}
  The first part follows from Corollary \ref{S64} and from the fact that
  property \textbf{P5A } holds for the projections of the Sierpi\'nski-carpet. This  was proved in \cite{manning2013dimension}.

Now we turn to the proof of the second part of the Theorem. This assertion would immediately follow from Shmerkin and Solomyak \cite[Theorem A]{shmerkin2014absolute} if we could guarantee that the Non-Degeneracy Condition holds. Unfortunately in this case it does not hold. Still it is possible to gain the same conclusion not from the assertion of \cite[Theorem A]{shmerkin2014absolute} but from its proof, combined with \cite[Lemma 5.4]{shmerkin2014absolute} as it was explained by P. Shmerkin to the authors \cite{test1}. For completeness we point out the only two steps of the original proof  of \cite[Theorem A]{shmerkin2014absolute} where  we have to make slight modifications.

Let $\mathcal{P}$ be the set of probability Borel measures on the line. We write
\begin{equation}\label{R96}
  \mathcal{D}:=
  \left\{
  \mu\in\mathcal{P}:
  |\widehat{\mu}(\xi)|=\Ordo_\mu\left(|\xi|^{-\sigma}\right)
  \mbox{ for some } \sigma>0
  \right\}.
\end{equation}
The elements of $\mathcal{D}$ are the probability measures on the line with power Fourier-decay.
Let $\left\{\varphi^{(\alpha)}_i\right\}_{i=1}^{8}$  be the IFS defined in Example \ref{S1}.
Now we  write the projected self-similar natural measure $\nu_\alpha$ of the Sierpi\'nski carpet in the infinite convolution form. That is we consider $\nu_\alpha$ as the distribution of the following infinite random sum:
\[
  \nu_\alpha\sim\sum_{n=1}^\infty (1/3)^{n-1}A_n,
\]
where $A_n$ are independent Bernoulli random variables with $\mathbb{P}(A_n=\varphi^{(\alpha)}_i(0))=1/8$. For $k\geq 2$ integers we decompose the random sum on the right hand side as
\[
  \nu_\alpha\sim\sum_{\substack{n=1\\ k\nmid n}}^\infty (1/3)^{n-1}A_n + \sum_{\substack{n=1\\ k\mid n}}^\infty (1/3)^{n-1}A_n.
\]
Writing $\eta'_{\alpha,k}$ and $\eta''_{\alpha,k}$ for the distribution of the first and the second random sum, respectively, we get $\nu_\alpha=\eta'_{\alpha,k}*\eta''_{\alpha,k}$. Our goal is to show that with appropriately chosen $k$ we can apply \cite[Corollary 5.5]{shmerkin2014absolute} to $\eta'_{\alpha,k}$ and $\eta''_{\alpha,k}$ which would conclude the proof. To this end it is enough to show that on the one hand
\begin{equation}\label{R95}
  \dim_{\rm H} \eta'_{\alpha,k}=1\quad \mbox{ for every $k$ large enough }
\end{equation}
and on the other hand we have
\begin{equation}\label{R94}
  \eta''_{\alpha,k}\in\mathcal{D}, \quad \forall k \geq 2.
\end{equation}

This is the first place where we depart from the proof of
 \cite[Theorem A]{shmerkin2014absolute}. According to \cite[Theorem 5.3]{shmerkin2015projections} if $\dim_{\rm S} \eta'_{\alpha,k}>1$ (which holds if $k$ is big enough), then there exists a countable set $E'_k$ such that $\dim_{\rm H} \eta'_{\alpha,k}=1$ for all $\alpha \notin E'_k$. Note that the original proof at this point relies on the non-degeneracy condition, what we do not use here.

To get the Fourier decay of $\eta''_{\alpha,k}$ we follow the proof of \cite[Theorem A]{shmerkin2014absolute}.
In our special case,  we may choose the function $f$  in the middle of
page 5147 in \cite{shmerkin2014absolute} as
\[
  f(\alpha)=\frac{\mathrm{proj}_\alpha\left(\frac{2}{3},0\right)
  -\mathrm{proj}_\alpha\left(\frac{1}{3},\frac{2}{3}\right)}{\mathrm{proj}_\alpha\left(0,\frac{2}{3}\right)
  -\mathrm{proj}_\alpha\left(\frac{1}{3},\frac{2}{3}\right)}
  =2\tan(\alpha)-1
  .
\]
Clearly $f$ is non-constant  and $f^{-1}$ preserves the Hausdorff dimension. Hence by \cite[Lemma 6.2 and Proposition 3.1]{shmerkin2014absolute} there is a set $E''_k$ of Hausdorff dimension $0$ such that $\eta''_{\alpha,k}$ has power Fourier-decay for all $\alpha \notin E''_k$. Altogether, setting the $0$-dimensional exceptional set of parameters $E=\bigcup_{k=2} ^{\infty} E'_k\cup E''_k$, by \cite[Corollary 5.5]{shmerkin2014absolute} we have that $\nu_\alpha$ is absolutely continuous with an $L^q$ density for some $q>1$ for all $\alpha \notin E$ exactly as in the proof of  \cite[Theorem A]{shmerkin2014absolute} with no further modifications.

\end{proof}

In Theorem \ref{S65} we have proved that the family of the angle-$\alpha$
projection of the Sierpi\'nski-carpet is antagonistic in the sense of Definition \ref{S58}. In the rest of this note we prove that there are many antagonistic families.

\section{An equi-homogeneous family for which the  Non-Degeneracy Condition holds}

First of all we remark that the Non-Degeneracy Condition does not hold for all families.
For example let
\begin{equation}\label{S89}
  \mathcal{F}_\alpha:=\left\{\frac{1}{2} \cdot x+t_{i}^{(\alpha)}\right\}_{i=1}^{m},\quad m \geq 2.
\end{equation}
Then for every $\alpha$, $\Pi_\alpha(\mathbf{i})=\Pi_\alpha(\mathbf{j})$ for
$\mathbf{i}=(1,2, \dots ,2, \dots )$ and $\mathbf{j}=(2,1, \dots ,1, \dots )$.
So, the non-degeneracy condition does not hold.

However, if the contraction ratio is the same $\lambda\in\left(0,\frac{1}{2}\right)$ for all maps of all IFS in the family (the family is equi-homogeneous) and the translations are independent real-analytic functions then the Non-Degeneracy Condition holds:

\begin{proposition}\label{S88}
Given
 \begin{equation}\label{S89}
  \mathcal{F}_\alpha:=\left\{\lambda \cdot x+t_{i}^{(\alpha)}\right\}_{i=1}^{m},\quad m \geq 2, \quad \alpha\in A,
\end{equation}
where
\begin{description}
  \item[(a)] $\lambda\in \left(0, \frac{1}{2}\right)$ and
  \item[(b)] For $\ell =1, \dots ,m$, the functions $\alpha\mapsto t_{\ell }^{(\alpha)}=\sum\limits_{k=0}^{\infty }a_{\ell ,k} \cdot \alpha^k$,
are independent
real-analytic functions:
\begin{equation}\label{S90}
\forall \alpha\in A,\   \sum\limits_{i=1}^{m}\gamma_i \cdot  t_i^{(\alpha)}\equiv 0 \mbox{ iff }
  \gamma_1= \cdots =\gamma_m=0.
\end{equation}
\end{description}

Then $\left\{\mathcal{F}_\alpha\right\}_{\alpha\in A}$ satisfies the Non-Degeneracy Condition.
\end{proposition}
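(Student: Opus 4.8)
The plan is to fix an arbitrary pair $\mathbf{i}\ne\mathbf{j}$ in $\Sigma$ and to exhibit one parameter $\alpha\in A$ with $\Pi_\alpha(\mathbf{i})\ne\Pi_\alpha(\mathbf{j})$; equivalently, setting $n:=\min\{k:i_k\ne j_k\}$, it suffices to show that the function $F(\alpha):=\Pi_\alpha(\mathbf{i})-\Pi_\alpha(\mathbf{j})$ does not vanish identically on $A$. Since every similarity in every $\mathcal{F}_\alpha$ has ratio $\lambda$, the composition notation gives $r_{\alpha,i_1\dots i_{k-1}}=\lambda^{k-1}$, so by \eqref{S31} we have $\Pi_\alpha(\mathbf{i})=\sum_{k\ge 1}t_{i_k}^{(\alpha)}\lambda^{k-1}$; by \textbf{P2}--\textbf{P3} this series converges absolutely, uniformly in $\alpha$. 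Cancelling the first $n-1$ terms (equal for $\mathbf{i}$ and $\mathbf{j}$) yields $F(\alpha)=\lambda^{n-1}\sum_{k\ge n}\bigl(t_{i_k}^{(\alpha)}-t_{j_k}^{(\alpha)}\bigr)\lambda^{k-n}$.

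The key step is to regroup this tail series by the \emph{value} of the digit rather than by its position. Writing $t_{i_k}^{(\alpha)}=\sum_{\ell=1}^{m}\delta_{i_k,\ell}\,t_\ell^{(\alpha)}$ and interchanging the finite sum over $\ell$ with the absolutely convergent sum over $k$, one obtains
\[
F(\alpha)=\lambda^{n-1}\sum_{\ell=1}^{m}\gamma_\ell\,t_\ell^{(\alpha)},\qquad
\gamma_\ell:=\sum_{k\ge n}\lambda^{k-n}\bigl(\delta_{i_k,\ell}-\delta_{j_k,\ell}\bigr),
\]
where the $\gamma_\ell$ are real constants \emph{not depending on $\alpha$}, finite because $|\gamma_\ell|\le\sum_{k\ge n}\lambda^{k-n}=(1-\lambda)^{-1}$.

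Then I would argue by contradiction. If $F\equiv 0$ on $A$, dividing by $\lambda^{n-1}\ne 0$ gives $\sum_{\ell=1}^m\gamma_\ell t_\ell^{(\alpha)}\equiv 0$, so the linear-independence hypothesis \eqref{S90} forces $\gamma_1=\dots=\gamma_m=0$; in particular $\gamma_{i_n}=0$. But the $k=n$ term of $\gamma_{i_n}$ is $\delta_{i_n,i_n}-\delta_{j_n,i_n}=1$ (here we use $i_n\ne j_n$), while the remaining terms sum to at most $\sum_{k\ge n+1}\lambda^{k-n}=\lambda/(1-\lambda)$ in absolute value, and $\lambda/(1-\lambda)<1$ precisely because $\lambda\in(0,\tfrac12)$. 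Hence $\gamma_{i_n}\ge 1-\lambda/(1-\lambda)>0$, a contradiction; therefore $F\not\equiv 0$ and the Non-Degeneracy Condition follows.

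The absolute convergence and the interchange of summations are routine consequences of \textbf{P2}--\textbf{P3}. The one real idea, and the unique place where $\lambda<\tfrac12$ enters, is the regrouping by digit value together with the observation that the ``leading'' coefficient $\gamma_{i_n}$ then dominates the rest of its own geometric series; this genuinely fails for $\lambda=\tfrac12$, as the elementary example displayed just before the proposition shows. I note in passing that real-analyticity of the maps $\alpha\mapsto t_\ell^{(\alpha)}$ plays no role in this proposition — only their pointwise linear independence \eqref{S90} and the uniform bound \textbf{P3} are used.
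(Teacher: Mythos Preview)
Your proof is correct and follows essentially the same route as the paper: both write $\Pi_\alpha(\mathbf{i})-\Pi_\alpha(\mathbf{j})$ as $\sum_\ell q_\ell\, t_\ell^{(\alpha)}$ with $q_\ell=\sum_k\lambda^{k-1}(\delta_{i_k,\ell}-\delta_{j_k,\ell})$, invoke the linear independence hypothesis to force $q_\ell=0$, and then use $\lambda<\tfrac12$ to derive a contradiction from the first differing digit. Your version is marginally cleaner in that you apply \eqref{S90} directly to the functions, whereas the paper passes through the Taylor coefficient vectors $\mathbf{a}_\ell$; this is why your closing remark---that only linear independence, not analyticity, is used---is correct.
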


\begin{proof}
Fix two distinct $\mathbf{i},\mathbf{j}\in \Sigma$. For every $\ell =1, \dots ,m$
,
define $q_\ell :=q_\ell (\mathbf{i},\mathbf{j})$ by
\begin{equation}\label{S91}
  q_{\ell }:=\sum\limits_{\left\{k:i_k=\ell \right\}} \lambda^{(k-1)}
  -
 \sum\limits_{\left\{k:j_k=\ell \right\}} \lambda^{(k-1)}.
\end{equation}
Then
\begin{equation}\label{S93}
  \Pi_\alpha(\mathbf{i})-\Pi_\alpha(\mathbf{j})
  =
  \sum\limits_{k=0}^{\infty }
\alpha^k \cdot b_{k},
\end{equation}
where
\begin{equation}\label{S96}
  b_k:=\sum\limits_{\ell =1}^{m}
a_{\ell ,k}\cdot q_\ell
\end{equation}
 for all $k\in\mathbb{N}^+$, where $\mathbb{N}^+:=\mathbb{N}\setminus \left\{0\right\}$. Observe that for $\mathbf{b}:=(b_0,b_1, \dots )$ and $\forall \ell=1,\dots,m$ for
 $\mathbf{a}_\ell :=(a_{\ell ,0},a_{\ell ,1},a_{\ell ,2}, \dots a_{\ell ,k}, \dots )$ we have that \eqref{S96} can be written as
 \begin{equation}\label{S97}
   \sum\limits_{\ell =1}^{m}q_\ell  \cdot \mathbf{a}_\ell =\mathbf{b.}
 \end{equation}
Assume that
\begin{equation}\label{S94}
  \forall \alpha\in A,\quad  \Pi_\alpha(\mathbf{i})-\Pi_\alpha(\mathbf{j})\equiv 0.
\end{equation}
To complete the proof it is enough to verify that
$
  \mathbf{i}=\mathbf{j}.
$
Using \eqref{S93}, we obtain from \eqref{S94} that
$b_k=0$ for all $k\in\mathbb{N}^+$. Note that \eqref{S90} states that the vectors $\left\{\mathbf{a}_\ell \right\}_{\ell =1}^{m}$ are independent.
So, from $\mathbf{b}=\mathbf{0}$ and from \eqref{S97}
we get that $q_1= \cdots =q_m=0$. This and $\lambda\in\left(0, \frac{1}{2}\right)$ implies that $\mathbf{i}=\mathbf{j}$.

 \end{proof}

\section{Antagonistic families of Self-similar IFS}
Here we prove the following assertion:
The collection of one-parameter families of IFS and self-similar measures are  dense in the collection  of equi-homogeneous IFS having contraction ratio $1/L$ ($L\in \mathbb{N}^+$) equipped with invariant measures with similarity dimension greater than one. To state this precisely, we need some definitions:

\begin{definition}\label{S98}\ First we consider collections of equi-homogeneous self-similar IFS having at least $4$ functions.
\begin{description}
  \item[(i)] Let $\pmb{\mathfrak{F}_{L}}$ be the collection of all pairs $(\mathcal{F}_\alpha,\mu)$ satisfying the conditions below:
    \begin{itemize}
      \item  $\left\{\mathcal{F}_\alpha\right\}_{\alpha\in A}$ is of the form:\begin{equation}\label{S40}
   \mathcal{F}_\alpha:=\left\{\varphi_{i}^{(\alpha)}(x):=\frac{1}{L} \cdot x +t_{i}^{(\alpha)}\right\}_{i=1}^{m},\quad \alpha\in A,
 \end{equation}
 where $m \geq 4$,  $A\subset \mathbb{R}$ is a proper interval ($\overline{A}$ is compact) and
 \begin{equation}\label{S100}
   L\in \mathbb{N},\qquad  3 \leq L\leq m-1.
 \end{equation}
 Moreover, the functions $\alpha\mapsto t_{\ell }^{\alpha}$ are continuous on $\overline{A}$ for all $\ell =1, \dots ,m$.
      \item Let $\mu$ be an infinite product measure $\mu:=(w_1, \dots ,w_m)^\mathbb{N}$  on $\Sigma:=\left\{1, \dots ,m\right\}^\mathbb{N}$  satisfying:
   \begin{equation}\label{S101}
 s:=\frac{-\sum\limits_{i=1}^{m}w_i\log w_i}{\log L}>1,
  \end{equation}
    \end{itemize}

  \item[(ii)] Now we define a rational coefficient  sub-collection $\pmb{\mathfrak{F}_{L,\mathrm{rac}}}\subset \pmb{\mathfrak{F}_{L}}$
      satisfying a non-resonance like condition \eqref{S34} below:
  \begin{itemize}
    \item

  $\alpha\mapsto t_{i}^{(\alpha)}$ are polynomials of rational coefficients. We assume that $\left\{t_{i}^{(\alpha)}\right\}_{i=1}^{m}$ are independent, that is \eqref{S90} holds. Moreover,
    \item The weights $w_i$ are rational: $\left\{w_i\right\}_{i=1}^{m}$, $w_i=r_i/q_i$, with $r_i,q_i\in \mathbb{N}\setminus \left\{0\right\}$ satisfying:
   \begin{equation}\label{S34}
    L\nmid \mathrm{lcm}\left\{q_1, \dots ,q_m\right\},
  \end{equation}
  where lcm is the least common multiple.
  Let
$\nu_\alpha:=(\Pi_\alpha)_*\mu.$
  \end{itemize}
\end{description}
\end{definition}

\begin{proposition}\label{S102}\
 \begin{description}
   \item[(a)] All elements $\{\nu_\alpha\}$ of $\pmb{\mathfrak{F}_{L,\mathrm{rac}}}$ are antagonistic.
   \item[(b)] $\pmb{\mathfrak{F}_{L,\mathrm{rac}}}$  is dense in $ \pmb{\mathfrak{F}_{L}}$ in the $\sup$ norm.
 \end{description}
\end{proposition}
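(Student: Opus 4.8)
The plan for part \textbf{(a)} is to check, for an arbitrary $(\mathcal{F}_\alpha,\mu)\in\pmb{\mathfrak{F}_{L,\mathrm{rac}}}$, all the hypotheses of the results gathered in Section~2 and then assemble them. Since $r_{\alpha,i}\equiv 1/L$ with $L\ge 3$ (so $r_{\min}=r_{\max}=1/L\in(0,1)$) and the $t_i^{(\alpha)}$ are polynomials, the Principal Assumption holds, so Corollary~\ref{S64} gives that $\mathfrak{Sing}(\mathcal{F}_\alpha,\mu)$ is a $G_\delta$ set. For density, fix $\alpha_0\in A$ and take a rational $\alpha\in A$ near $\alpha_0$: then $t_1^{(\alpha)},\dots,t_m^{(\alpha)}$ are rationals, hence all lie in the lattice $\mathcal{T}_\alpha=\{n a_\alpha\}_{n\in\mathbb{Z}}$ with $1/a_\alpha$ a common denominator. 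Since moreover $L\ge 2$, the weights are $w_i=r_i/q_i$ with $L\nmid\mathrm{lcm}\{q_1,\dots,q_m\}$, and $s>1$, the Extended B\'ar\'any--Rams Theorem (Theorem~\ref{S32}) applies to $\mathcal{F}_\alpha$ and gives $\dim_{\rm H}\nu_\alpha<1$, so $\nu_\alpha\perp\mathcal{L}\mathrm{eb}$, i.e.\ $\alpha\in\mathfrak{Sing}(\mathcal{F}_\alpha,\mu)$. As $\mathbb{Q}\cap A$ is dense in $A$, $\mathfrak{Sing}(\mathcal{F}_\alpha,\mu)$ is a dense $G_\delta$ set, which is \eqref{S60}. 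Finally, $1/L<1/2$ together with the independence of the real-analytic functions $t_i^{(\alpha)}$ allows us to invoke Proposition~\ref{S88} to get the Non-Degeneracy Condition; combined with the real-analyticity of $\alpha\mapsto r_{\alpha,i}\equiv 1/L$ and $\alpha\mapsto t_i^{(\alpha)}$, this puts us in the setting of the Shmerkin--Solomyak Theorem (Theorem~\ref{S85}). Since $\dim_{\rm S}\nu_{\alpha,\mathbf{w}}=s>1$ for \emph{every} $\alpha$, that theorem produces a set $E\subset A$ with $\dim_{\rm H}E=0$ outside which $\nu_\alpha\in\mathfrak{Cont}_Q$; hence $\big(\mathfrak{Cont}_Q(\mathcal{F}_\alpha,\mu)\big)^c\subseteq E$, and since $\mathfrak{Sing}\subseteq(\mathfrak{Cont}_Q)^c$ both Hausdorff dimensions in \eqref{S59} vanish. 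Thus $\{\nu_\alpha\}$ is antagonistic.

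For part \textbf{(b)}, given $(\mathcal{F}_\alpha,\mu)\in\pmb{\mathfrak{F}_{L}}$ and $\varepsilon>0$, the plan is to perturb the translations and the weights separately. For the translations, apply the Weierstrass approximation theorem on the compact interval $\overline{A}$ to obtain, for each $i$, a polynomial within $\varepsilon/2$ of $t_i^{(\alpha)}$ in the sup norm, then round its coefficients to nearby rationals; since $\overline{A}$ is bounded, this keeps the polynomials $Q_i$ within $\varepsilon$ of $t_i^{(\alpha)}$. The $Q_i$ must moreover satisfy the independence condition \eqref{S90}; as the $m$-tuples of polynomials of a fixed degree $D\ge m-1$ that are linearly dependent form a proper algebraic subset of the coefficient space, a further arbitrarily small rational perturbation (adding tiny rational multiples of $1,\alpha,\dots,\alpha^{m-1}$) makes the $Q_i$ independent without spoiling the approximation. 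For the weights, pick a large integer $N$ with $L\nmid N$ (e.g.\ $N\equiv 1\pmod{L}$) and positive integers $a_1,\dots,a_m$ with $\sum_i a_i=N$ and each $a_i/N$ close to $w_i$ (standard rounding of $Nw_i$, taking $a_i=1$ when $w_i=0$); set $w_i':=a_i/N$. Then every reduced denominator $q_i$ divides $N$, so $\mathrm{lcm}\{q_1,\dots,q_m\}$ divides $N$ and hence is not divisible by $L$, which is \eqref{S34}; and since $\mathbf{w}\mapsto-\sum_i w_i\log w_i$ is continuous and $s>1$ strictly, for $N$ large the perturbed similarity dimension still exceeds $1$. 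The resulting pair $\big(\{\frac{1}{L}x+Q_i(\alpha)\}_{i=1}^{m},(w_1',\dots,w_m')^{\mathbb{N}}\big)$ then lies in $\pmb{\mathfrak{F}_{L,\mathrm{rac}}}$ and is $\varepsilon$-close to $(\mathcal{F}_\alpha,\mu)$, proving density.

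The points that need care are in \textbf{(b)}: one has to satisfy, simultaneously, that the perturbed weights form a rational probability vector, obey $L\nmid\mathrm{lcm}\{q_i\}$, and preserve $s>1$; the device of a single denominator $N$ coprime to $L$ takes care of all three at once, the key observation being that reducing $a_i/N$ to lowest terms can only shrink the denominator, so its lcm still divides $N$. In \textbf{(a)} the only subtlety worth flagging is that — in contrast to the Sierpi\'nski-carpet family of Theorem~\ref{S65}, where the Non-Degeneracy Condition fails and one must descend into the proof of \cite[Theorem~A]{shmerkin2014absolute} — here Proposition~\ref{S88} provides Non-Degeneracy outright, so the Shmerkin--Solomyak Theorem may be quoted as a black box.
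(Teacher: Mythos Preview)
Your proof is correct and follows essentially the same approach as the paper: for \textbf{(a)}, the paper likewise invokes Proposition~\ref{S88} to feed into the Shmerkin--Solomyak Theorem for \eqref{S59}, and applies Theorem~\ref{S32} at rational parameters together with Corollary~\ref{S64} for \eqref{S60}; for \textbf{(b)}, the paper simply asserts the existence of nearby independent rational-coefficient polynomials and of nearby rational weights satisfying \eqref{S34}, where you have supplied the explicit Weierstrass--rounding--perturbation argument and the single-denominator device $N$ with $L\nmid N$.
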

\begin{proof}
  \textbf{(a)} It follows from Proposition \ref{S88} that
  we can apply Shmerkin-Solomyak Theorem (Theorem \ref{S85}). This yield that  $\mathfrak{Cont}_Q$ (defined in \eqref{S56}) satisfies
  $\dim_{\rm H} (\mathfrak{Cont}_Q(\mathcal{F}_\alpha,\mu))^c=0$. On the other hand,
  for every rational parameter $\alpha$, $(\mathcal{F}_\alpha,\mu)$ satisfies the conditions of Theorem \ref{S32}. So, for every $\alpha\in\mathbb{Q}$ we have $\dim_{\rm H} \nu_\alpha<1$. Using this and Corollary \ref{S64} we get that
  $\mathfrak{Sing}(\mathcal{F}_\alpha,\mu)$ is a dense $G_\delta$ set.
  So, $\left\{\nu_a\right\}_{\alpha\in A}$ is antagonistic.

  \textbf{(b)} Let $(\widetilde{\mathcal{F}}_\alpha,\widetilde{\mu})\in  \pmb{\mathfrak{F}_{L}}$, with
   $\widetilde{\mathcal{F}}_\alpha:=\left\{\varphi_{i}^{(\alpha)}(x):=\frac{1}{L} \cdot x +\widetilde{t}_{i}^{(\alpha)}\right\}_{i=1}^{m}$ and
   $\widetilde{\mu}=(\widetilde{w}_1, \dots ,\widetilde{w}_m)^\mathbb{N}$. Fix an  $\varepsilon>0$.
   We can find independent polynomials $\alpha\mapsto t_{i}^{(\alpha)}$ $i=1, \dots ,m$
  of rational coefficients such that $\|\widetilde{t}_{i}^{(\alpha)}-t_{i}^{(\alpha)}\|<\varepsilon$ for all $\alpha\in \overline{A}$ and $i=1, \dots ,m$.
  Moreover,
   we can find  a product measure $\mu=(w_1, \dots ,w_m)^\mathbb{N}$ such that for $\mathbf{w}=(w_1, \dots ,w_m)$ we have $\|\mathbf{w}-\widetilde{\mathbf{w}}\|<\varepsilon$
    and $\mathbf{w}$
    has rational coefficients $w_i=p_i/q_i$ satisfying \eqref{S34}.
\end{proof}

\begin{corollary}\label{S103}
  Let $(\mathcal{F}_\alpha,\mu)\in\pmb{\mathfrak{F}_{L,\mathrm{rac}}}$
  Then
  \begin{equation}\label{S104}
   \dim_{\rm P} \left( \mathfrak{Sing}(\mathcal{F}_\alpha,\mu)\right)=1.
  \end{equation}
\end{corollary}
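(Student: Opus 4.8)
The plan is to combine Proposition~\ref{S102}(a) with Corollary~\ref{S115} (equivalently, with the Fact that precedes it). By Proposition~\ref{S102}(a), every element $(\mathcal{F}_\alpha,\mu)$ of $\pmb{\mathfrak{F}_{L,\mathrm{rac}}}$ is antagonistic; hence, by Definition~\ref{S58}, and in particular by condition \eqref{S60}, the set $\mathfrak{Sing}(\mathcal{F}_\alpha,\mu)$ is a \emph{dense} $G_\delta$ subset of the parameter interval $A\subset\mathbb{R}$. In particular $\mathfrak{Sing}(\mathcal{F}_\alpha,\mu)$ is not nowhere dense. Since here the ambient dimension is $d=1$ (the maps $\varphi_i^{(\alpha)}$ act on the line), we are precisely in the setting of Corollary~\ref{S115}: alternative (i) there is ruled out, so alternative (ii) must hold, i.e.\ $\dim_{\rm P}\left(\mathfrak{Sing}(\mathcal{F}_\alpha,\mu)\right)=1$, which is \eqref{S104}.

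If one prefers to argue directly from the Fact: density of $H:=\mathfrak{Sing}(\mathcal{F}_\alpha,\mu)$ in $A$ gives $A\subset\overline{H}$, so we may pick an open interval $B\subset A$ with $B\subset\overline{H}$; then $B\cap H$ is a dense $G_\delta$ in $B$, hence of the second category by Baire's theorem. Consequently, in any countable cover $B\cap H\subset\bigcup_i E_i$ some $E_{i_0}$ fails to be nowhere dense in $B$, and therefore $\overline{\dim_{\rm B}}\,E_{i_0}=1$. By the definition \eqref{S200} of packing dimension, $\dim_{\rm P}H\ge\dim_{\rm P}(B\cap H)\ge 1$, while $\dim_{\rm P}H\le 1$ holds trivially in $\mathbb{R}$.

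There is essentially no real obstacle here: the corollary is immediate once the antagonistic property of $\pmb{\mathfrak{F}_{L,\mathrm{rac}}}$ (Proposition~\ref{S102}(a)) and the topological-to-metric transfer (the Fact, or Corollary~\ref{S115}) are available. The one point worth keeping in mind is that both ingredients of the antagonistic property are genuinely used: density alone is not enough, since a merely dense countable set such as $\mathbb{Q}\cap A$ has packing dimension $0$; it is the combination of density with the $G_\delta$ structure of $\mathfrak{Sing}(\mathcal{F}_\alpha,\mu)$ (Corollary~\ref{S64}, via Theorem~\ref{S11}) that forces full packing dimension.
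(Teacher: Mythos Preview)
Your proof is correct and follows essentially the same approach as the paper: establish that $\mathfrak{Sing}(\mathcal{F}_\alpha,\mu)$ is a dense $G_\delta$ subset of $A$ and then invoke Corollary~\ref{S115} (or the Fact preceding it) with $d=1$. Your citation of Proposition~\ref{S102}(a) for the density is in fact cleaner than the paper's own two-line proof, which attributes the density to the Shmerkin--Solomyak Theorem (that theorem only gives $\dim_{\rm H}\mathfrak{Sing}=0$; density actually comes from Theorem~\ref{S32} at rational parameters, as in the proof of Proposition~\ref{S102}(a)).
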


\begin{proof}
  From Solomyak-Shemerkin Theorem, we obtain that $ \mathfrak{Sing}(\mathcal{F}_\alpha,\mu)$ is dense. Then the assertion follows from Corollary \ref{S115}.
\end{proof}


\begin{acknowledgement}
  The authors would like to say thanks for very useful comments and suggestions to Bal\'azs B\'ar\'any, Henna Koivusalo, Micha\l\  Rams, Pablo Shmerkin and Boris Solomyak.
	
	Moreover, we are grateful to the anonymous referee for a suggestion which made it possible to soften the conditions of Theorem \ref{S128}.
\end{acknowledgement}

\bibliographystyle{plain}
\bibliography{sing_proj_arxiv_17_02_22.bib}

\end{document}